\documentclass{amsart}
\usepackage[english]{babel}
\usepackage[utf8]{inputenc}
\usepackage{amsmath, amssymb,epic,graphicx,mathrsfs,enumerate}
\usepackage[all]{xy}
\usepackage{color}
\usepackage{comment}
\usepackage{enumitem}

\usepackage[]{frontespizio}
\usepackage{amssymb}
\usepackage{amsmath}
\usepackage{amsthm}
\usepackage{graphicx}
\usepackage[colorlinks=true]{hyperref}
\definecolor{air}{rgb}{0.36, 0.54, 0.66}
\hypersetup{
	citecolor=air,
	linkcolor=air}
\usepackage{amscd}
\usepackage{enumerate}
\usepackage{mathrsfs}

\usepackage{amsmath}
\usepackage{amsthm}
\usepackage{amssymb}
\usepackage{latexsym}
\usepackage{epsfig}

\DeclareMathOperator{\core}{Core} 
\DeclareMathOperator{\frat}{Frat} 
 
\DeclareMathOperator{\aut}{Aut}

\DeclareMathOperator{\soc}{soc}

\newcommand{\sym}{\mathrm{Sym}}

\DeclareMathOperator{\End}{End}

\newcommand{\gen}[1]{\left\langle#1\right\rangle}

\newcommand{\st}{such that }
\newcommand{\ifa}{if and only if }

% THEOREMS ---------------------------------------------------------------
%%%%%%%%%\theoremstyle{plain}
\newtheorem{thm}{Theorem}
\newtheorem{cor}[thm]{Corollary}
 \newtheorem{lemma}[thm]{Lemma}
\newtheorem{prop}[thm]{Proposition} 
 \newtheorem{defn}[thm]{Definition}
 \newtheorem{con}[thm]{Conjecture}

\numberwithin{equation}{section}

\renewcommand{\footnote}{\endnote}
\newcommand{\ignore}[1]{}\makeglossary

%\renewcommand{\leq}{\leqslant}
%\renewcommand{\geq}{\geqslant}
%\renewcommand{\nn}{\trianglelefteqslant}

%%%%%%%%%%%%%%%%%%%%%%%%%%%%%%%%%%%%%%%%%%%%%%%%%%%%%%%%%%%%%
%%%%%%%%%%%%%%%%%%%%%%%%%%%%%%%%%%%%%%%%%%%%%%%%%%%%%%%%%%%%%
%%%%%%%%%%%%%%%%%%%%%%%%%%%%%%%%%%%%%%%%%%%%%%%%%%%%%%%%%%%%%

\begin{document}
	\bibliographystyle{amsplain}

\title{On the connectivity of the generating and rank graphs of finite groups}

\author{Andrea Lucchini}
\address{A. Lucchini, Universit\`a di Padova, Dipartimento di Matematica ``Tullio Levi-Civita'', Via Trieste 63, 35121 Padova, Italy}
\email{lucchini@math.unipd.it}

\author{Daniele Nemmi}
\address{D. Nemmi, Universit\`a di Padova, Dipartimento di Matematica ``Tullio Levi-Civita'', Via Trieste 63, 35121 Padova, Italy}
\email{dnemmi@math.unipd.it}

\begin{abstract}
	The generating graph encodes how generating pairs are spread among the elements of a group. For more than ten years it has been conjectured that this graph is connected for every finite group. In this paper, we give evidence supporting this conjecture: we prove that it holds for all but a finite number of almost simple groups and give a reduction to groups without non-trivial soluble normal subgroups. Let $d(G)$ be the minimal cardinality of a generating set for $G$. When $d(G)\geq3$, the generating graph is empty and the conjecture is trivially true. We consider it in the more general setting of the rank graph, which encodes how pairs of elements belonging to generating sets of minimal cardinality spread among the elements of a group. It carries information even when $d(G)\geq3$ and corresponds to the generating graph when $d(G)=2$. We prove that it is connected whenever $d(G)\geq3$, giving tools and ideas that may be used to address the original conjecture.
	
%	we consider the rank graph of a group $G$, which is the graph in which we join two elements when there exist a generating set of minimal cardinality and then remove the isolated vertices, giving tools and ideas that may be used to address the original conjecture.
\end{abstract}
\thanks{Project funded by the EuropeanUnion – NextGenerationEU under the National Recovery and Resilience Plan (NRRP), Mission 4 Component 2 Investment 1.1 - Call PRIN 2022 No. 104 of February 2, 2022 of Italian Ministry of University and Research; Project 2022PSTWLB (subject area: PE - Physical Sciences and Engineering) " Group Theory and Applications"}
\maketitle

\hbox{}

\section{Introduction}

The study of generating properties of finite groups is a very old subject of investigation and plays a central role in many aspects of group theory, both from a theoretic and a computational point of view. A milestone in this topic is a paper of Steinberg \cite{Stein} who, in 1962, proved that the finite simple groups known at the time (almost all) can be generated by two elements. 	This strong result led to many interesting questions which have been the focus of intensive research in the recent years. In 2000, Guralnick and Kantor \cite{GK} proved a stronger version of this result: i.e. in a finite simple group, every element different from the identity is contained in a generating pair and in 2021, Burness, Guralnick and Harper \cite{spread} established a conjecture, remained open for many years, which states that in a finite group $G$, any non-identity element belongs to a generating pair if and only if every proper quotient of $G$ is cyclic.
Another notable achievement of the investigation on generating properties of simple groups is that if $G$ is a finite simple group, the probability that two random elements of $G$ are a generating pair approaches $1$ if $|G|\rightarrow\infty$, see \cite{dixon}, \cite{KL}, \cite{LibSha}.
The research on the topic therefore shows an abundance of generating pairs in simple groups which has applications in many areas of group theory. It turned out to be a successful strategy to encode in a graph how these pairs spread within the group. This approach was used implicitly for example in \cite{LibSha2}, and presented explicitly in 2008, when the first author and Mar\'oti introduced the \emph{generating graph} \cite{LM1}. Given a non-cyclic finite group $G$, we consider the graph $\bar\Gamma(G)$ whose vertex set is $G$ and two elements $g,h\in G$ are joined if $G=\gen{g,h}$ and then define the generating graph $\bar\Delta(G)$ of $G$ as the graph induced by the non-isolated vertices of $\bar\Gamma(G)$. Several results are known about this graph: for survey references the reader can see \cite{bur_survey} and \cite{har_survey}. 

Along the years, another indicator of the abundance of generating pairs in groups has been noticed: it turned out that the generating graph is connected for many classes of groups: for example in 2013 the first author and Crestani \cite{CL3} proved that $\bar\Delta(G)$ is connected for every soluble group and in \cite{CL2} they proved that it is connected for characteristically simple groups, moreover the first author \cite{Luc17} shows that if $G$ is soluble, then ${\rm diam}(\bar\Delta(G))\leq 3$. In \cite{spread} the authors generalise previous results about simple groups by showing that if every proper quotient of $G$ is cyclic, then $\bar\Delta(G)$ is connected with diameter $2$. 

Therefore the following conjecture has naturally appeared in the area:

\begin{con}\label{conj}
	$\bar\Delta(G)$ is connected for every finite group $G$.
\end{con}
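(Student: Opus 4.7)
The plan is to attack Conjecture \ref{conj} by induction on $|G|$, following the standard reduction philosophy in the area. First, I may restrict to groups $G$ with $d(G)=2$, since otherwise $\bar\Delta(G)$ is empty. Moreover, $G=\gen{g,h}$ if and only if $G/\frat(G)=\gen{\bar g,\bar h}$, so connectivity of $\bar\Delta(G)$ is equivalent to connectivity of $\bar\Delta(G/\frat(G))$ (lift any path in the quotient to $G$ by arbitrary coset representatives). Hence I may assume $\frat(G)=1$, so that $G$ embeds as a subdirect product of the primitive quotients attached to its chief factors.

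Next I would invoke the known results to dispose of the ``extreme'' cases. When $G$ is soluble, connectivity of $\bar\Delta(G)$ was proved by Crestani--Lucchini \cite{CL3}; when every proper quotient of $G$ is cyclic, connectivity with diameter $2$ is established in \cite{spread}. The first substantial step is then the reduction to groups with no non-trivial soluble normal subgroup, which the abstract announces as part of this paper. Concretely, given a minimal normal abelian subgroup $N$ of $G$, the goal is to lift a path joining two given generating-graph vertices in $\bar\Delta(G/N)$ to a path in $\bar\Delta(G)$. When $N\leq\frat(G)$, the Frattini reduction already handles it; when $N$ is complemented, one uses crown-based and cohomological arguments together with the structure of generating pairs modulo $N$ (analogous to those employed for the soluble case in \cite{CL3} and \cite{Luc17}) to produce the required lifts.

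Once the reduction is in place, one is left with groups whose socle is a direct product $T_1\times\cdots\times T_k$ of non-abelian simple groups, with $G$ embedding into $\aut(T_1)\wr\sym(k)$ and permuting the factors. The plan here is twofold: first, settle the almost simple case ($k=1$) by combining the spread results of \cite{spread} with an explicit analysis of the finitely many residual exceptions --- this is the ``all but finitely many almost simple groups'' claim of the abstract; second, for $k\geq 2$, exploit the abundance of generating pairs in each $T_i$, together with diagonal adjustments that permute the coordinates, to connect arbitrary vertices. This reduces the question to a combinatorial statement about the generating graphs of the individual factors and their behaviour under outer automorphisms.

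The main obstacle, in my view, lies in extensions $1\to V\to G\to H\to 1$ in which $V$ is an elementary abelian $p$-group, $H$ is almost simple acting irreducibly on $V$, and $H^1(H,V)\neq 0$: in this situation the lift of a generating pair of $H$ to $G$ is obstructed by cohomology, so one cannot simply transport paths from the quotient and must instead build internal paths inside $G$ by a more delicate argument (e.g.\ exploiting that elements of $V$ can be absorbed into second coordinates of suitably chosen generating pairs). This is precisely the setting in which the authors' switch to the rank graph and the minimal-rank analysis, where unconditional connectivity is established as soon as $d(G)\geq 3$, is expected to provide the tools needed eventually to close the gap for $d(G)=2$ as well.
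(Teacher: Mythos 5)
The statement you are addressing is presented in the paper as a \emph{conjecture}, and neither the paper nor your proposal proves it: what you have written is a programme, and you concede as much yourself in your final paragraph. So there is a genuine gap --- the problem remains open --- but it is worth comparing your roadmap with what the paper actually establishes. Your first two reductions (passing to $G/\frat(G)$; eliminating soluble normal subgroups) do match the paper: the Frattini step is Lemma \ref{frat} and the soluble reduction is Lemma \ref{norsol}, the latter proved unconditionally for every $d$, including $d=2$. Your third step overstates what is known: the almost simple case is \emph{not} settled by ``an explicit analysis of the finitely many residual exceptions''; Theorem \ref{almost} only yields connectivity for $|S|$ large enough, and the exceptional cases are left untouched.

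More importantly, you misplace the essential obstruction. You locate it in abelian extensions $1\to V\to G\to H\to 1$ with $H^1(H,V)\neq 0$; but those are exactly the extensions that Lemma \ref{norsol} already disposes of for $d=2$, by a counting argument on the complements of $V$ containing a given coset element. The case that actually blocks the conjecture is the non-abelian one: after the reductions, by Lemma \ref{riduco} one must show that every non-abelian crown-based power $L_\eta$ occurring as a quotient is $2$-locally connected, and the paper's proof of local connectivity for $t\geq 3$ (Lemmas \ref{cyct}, \ref{sempreuno}, \ref{condizione} and Proposition \ref{fatica}) breaks down at $t=2$. Concretely, Lemma \ref{delu} --- the $53/90$ bound used to verify condition (1) of Lemma \ref{cyct} via Lemma \ref{sempreuno} --- requires $d\geq 2$ coordinates to be varied while one element is held fixed, hence $t\geq 3$ generators in total; for $t=2$ the corresponding proportion can be small and the intersection argument producing a common column in the two generating tuples fails. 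Until that step (or a substitute for it) is supplied, your outline does not yield a proof, and no complete proof is currently known.
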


All the results listed above support Conjecture \ref{conj} and it seems that in addition to the fact
that $\bar\Delta(G)$ is connected, its diameter is bounded. However this last fact is not true:
in \cite[Theorem 5.4]{CL2} a family of groups in which
this diameter is unbounded is presented.

The aim of this paper is to give more evidence supporting Conjecture \ref{conj}. The first result we present is the asymptotic validity of the conjecture for almost simple groups: in particular we have

\begin{thm}\label{almost}
	Let $G$ be a 2-generated almost simple group with socle $S$. If $|S|$ is large enough, then $\bar\Delta(G)$ is connected.
\end{thm}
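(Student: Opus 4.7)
The plan is to split the argument according to whether the quotient $G/S$ is cyclic.

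Suppose first that $G/S$ is cyclic. Since $G$ is almost simple, the socle $S$ is the unique minimal normal subgroup of $G$; hence every nontrivial normal subgroup of $G$ contains $S$, and every proper quotient of $G$ is a quotient of the cyclic group $G/S$, and so cyclic. By the theorem of Burness, Guralnick and Harper in \cite{spread}, the spread of $G$ is at least $2$, which forces $\bar\Delta(G)$ to be connected of diameter at most $2$. This case needs no hypothesis on $|S|$.

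Suppose now that $G/S$ is not cyclic. By the classification of finite simple groups and the known description of $\out(S)$, this restricts $S$ to specific socles: the small case $S=A_6$ (which is ruled out once $|S|$ is large), and classical groups of Lie type in which diagonal, field and graph automorphisms genuinely interact, such as $\psl_n(q)$ with $n\geq 3$ and certain orthogonal and symplectic groups. For these remaining socles I would run a counting argument. Setting
\begin{equation*}
P(g) = \frac{|\{h \in G : \langle g,h\rangle = G\}|}{|G|},
\end{equation*}
the goal is to prove that $P(g) > 1/2$ for every non-isolated $g\in G$, provided $|S|$ is large enough. Once this is established, any two non-isolated vertices $g_1,g_2$ satisfy $P(g_1)+P(g_2)>1$, so their neighbourhoods in $\bar\Delta(G)$ must intersect, yielding a common neighbour and hence a path of length $2$ from $g_1$ to $g_2$.

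The main obstacle is the uniform lower bound on $P(g)$. The probabilistic generation theorems of Liebeck--Shalev and the subsequent refinements of Burness and collaborators give $P(g)\to 1$ as $|S|\to\infty$ for elements $g$ whose centralizer has controlled order, so the difficulty is concentrated on non-isolated $g$ with very large centralizer in $G$, typically outer elements of small order such as outer involutions lying in a non-principal coset of $S$. To treat these, I would work coset by coset, enumerating the maximal overgroups of $g$ inside $gS$ via the classification of maximal subgroups of classical almost simple groups, and using the uniform upper bounds on the number of such maximal subgroups to check that the non-generating proportion stays strictly below $1/2$ for $|S|$ large. If the strict threshold $1/2$ turned out to fail on a thin family, the same estimates would still yield connectedness (rather than diameter $2$) by routing paths through a carefully chosen hub in $S$, which suffices for the stated theorem.
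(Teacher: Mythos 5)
Your first case ($G/S$ cyclic) matches the paper's argument. The second case, however, contains a structural gap: the threshold $P(g)>1/2$ that your common-neighbour argument requires is unattainable when $G/S$ is non-cyclic, no matter how large $|S|$ is. Indeed, if $\langle g,h\rangle=G$ then $\langle gS,hS\rangle=G/S$, so $P(g)$ is bounded above by the proportion of elements of $G/S$ that generate $G/S$ together with $gS$; when $G/S\cong C_2\times C_2$ (which occurs, e.g., for suitable extensions of $\psl_n(q)$ by a diagonal and a field automorphism) this proportion is at most $1/2$, and strictly less once non-generation inside the good cosets is accounted for. So $P(g_1)+P(g_2)>1$ can never hold, and the obstruction is not the "outer elements with large centralizer" you identify but the arithmetic of the quotient. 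Your fallback of "routing paths through a hub in $S$" does not repair this, because any path in $\bar\Delta(G)$ projects to a path in the generating graph of $G/S$, so you must first control connectivity at the quotient level — something your plan never addresses.

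The paper's proof handles exactly this point by working \emph{coset by coset}: since $d(G/S)=2$ and $G/S$ is soluble, $\Delta_2(G/S)$ is connected by \cite{CL3}, giving a path $xS-x_2S-\dots-yS$ in the quotient; then for each edge $x_iS-x_{i+1}S$ one shows that the bipartite graph on $x_iS\cup x_{i+1}S$ with edges given by generating pairs is connected for $|S|$ large. That connectivity is itself non-trivial: it uses the uniform lower bound $\varepsilon$ of \cite{fgg} on the \emph{conditional} probability of generation within a coset (note $\varepsilon$ need not exceed $1/2$) together with the fact from \cite{lumo} that the edge density of this bipartite graph tends to $1$, to rule out a disconnecting bipartition. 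Your maximal-subgroup counting could plausibly substitute for these inputs, but it must be aimed at the conditional, within-coset probabilities and combined with the connectivity of $\Delta_2(G/S)$; as written, the proposal does not constitute a proof.
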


%Then, we try to address the conjecture in a more general setting as follows.

%to present some tools and strategy which could be helpful for a future proof.

%In this paper we are able to prove a weaker version of this conjecture when the group is not $2$-generated: to do so we need to generalise the concept of generating graph.

When a group $G$ cannot be generated by two elements, the generating graph of $G$ is the empty graph which is trivially connected. In what follows we give a more general statement of Conjecture \ref{conj} which includes (in a non-trivial way) also groups which are not $2$-generated. The second main achievement of the paper is the proof of this new conjecture for groups with $d(G)\geq3$, being $d(G)$ the minimum number of elements needed to generate $G$. In doing so, we present additional sufficient conditions for the generating graph to be connected and give the following reduction of Conjecture \ref{conj} which is straightforward from Lemma \ref{norsol}:
\begin{cor}
	$\bar\Delta(G)$ is connected for every finite group $G$ if and only if $\bar\Delta(G)$ is connected for every finite group $G$ without non-trivial normal soluble subgroups.
\end{cor}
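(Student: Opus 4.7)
The plan is a two-line reduction: the forward direction is a tautology, and the reverse direction follows from Lemma \ref{norsol} applied to the soluble radical.

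First I would dispose of the forward implication. Since finite groups without non-trivial normal soluble subgroups form a subclass of all finite groups, the assumption that $\bar\Delta(G)$ is connected for every finite group trivially restricts to this subclass.

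For the reverse implication, let $G$ be any finite group and let $R$ be its soluble radical, i.e.\ the largest normal soluble subgroup of $G$. A standard observation is that $G/R$ has trivial soluble radical, so the hypothesis gives that $\bar\Delta(G/R)$ is connected. At this point I would invoke Lemma \ref{norsol}, which, judging from its name and the claim that the corollary is straightforward from it, must assert that if $N$ is a normal soluble subgroup of a finite group $H$, then connectivity of $\bar\Delta(H/N)$ implies (and likely is equivalent to) connectivity of $\bar\Delta(H)$. Applied with $H=G$ and $N=R$, this yields that $\bar\Delta(G)$ is connected.

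The only real obstacle lies in Lemma \ref{norsol} itself, whose proof presumably proceeds by lifting edges of $\bar\Delta(G/R)$ to edges of $\bar\Delta(G)$ by suitably modifying chosen preimages, using the solubility of $R$ to guarantee that preimages can be chosen so as to still generate $G$. But for the present corollary that machinery is already packaged: granted the lemma, no further group-theoretic input is needed, and the corollary is a direct transfer through the soluble radical, exactly as the statement advertises.
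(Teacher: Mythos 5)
Your proposal is correct and matches the paper's intended argument exactly: the paper gives no separate proof of this corollary, stating only that it is straightforward from Lemma \ref{norsol}, whose content is precisely what you guessed (a soluble normal subgroup $N$ with $\Delta_d(G/N)$ connected forces $\Delta_d(G)$ connected), and passing to the quotient by the soluble radical is the intended application.
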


We now present the new setting in which we addressed Conjecture \ref{conj}.
Several possibilities of generalisations of the concept of
the generating graph can be considered for groups which are not 2-generated. Here we see the generating graph of 2-generated
non-cyclic groups as a graph on the elements of the group in which two of them
are joined if they belong to a generating set of $2 = d ( G )$ elements and extend this
definition in the following way.

%When a group cannot be generated by two elements, by definition the generating graph of $G$ is the empty graph. Though the world of $2$-generated finite groups is quite rich, indeed it contains all simple groups, it is too much restrictive to focus the attention only on it. Several possibilities of generalisations of the concept of the generating graph can be considered for groups which are not $2$-generated. The one we would like to consider here is to view the generating graph of $2$-generated non-cyclic groups as a graph on the elements of the group in which two of them are joined if they belong to a generating set of $2=d(G)$ elements and extend this definition in the following way.

\begin{defn}
	Given a non-cyclic finite group $G$ and an integer $d\geq d(G)$, denote by $\Gamma_d(G)$ the graph whose vertices are the elements of $G$ and where two different vertices $x$ and $y$ are adjacent if there exists a generating set of $G$ containing $x$ and $y$ and with cardinality $d.$
	Let $\Delta_d(G)$ be the graph obtained from  $\Gamma_d(G)$ by removing the isolated vertices.  We call
	$\Delta(G)=\Delta_{d(G)}(G)$  the rank graph of $G$.
\end{defn}

Notice that when $d(G)=2$, $\Delta(G)$ corresponds to the generating graph. 
In this new setting we present our main result supporting Conjecture \ref{conj}:

\begin{thm}\label{main_conn_rank}
	Let $G$ be  a finite group.
	Then $\Delta_d(G)$ is connected for every $d\geq \max\{3,d(G)\}$, in particular the rank graph $\Delta(G)$ is connected whenever $d(G)\geq3$.
\end{thm}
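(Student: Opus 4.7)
My plan is to separate the problem into the \emph{over-generated} regime $d > d(G)$ and the \emph{minimally generated} regime $d = d(G)\geq 3$.

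In the over-generated regime, the key observation is that generating $d$-sets necessarily contain a redundant element. Since $d-1\geq d(G)$, the group $G$ admits generating sets of cardinality $d-1$; pick such a $B$. For any vertex $x$ with $x\notin B$, the set $B\cup\{x\}$ is a generating $d$-set of distinct elements, so $x$ is adjacent in $\Delta_d(G)$ to every element of $B$. Given two vertices $x,y$, a short case analysis (handling whether each lies in $B$, with Nielsen-type adjustments to preserve distinctness when necessary) shows that either $x$ and $y$ have a common neighbour in $B$ or they are directly adjacent; the needed element outside $B$ always exists because any vertex of $\Delta_d(G)$ forces $|G|\geq d$. This gives $\diam\Delta_d(G)\leq 2$ and, in particular, covers the case $d(G)\leq 2$, where $d\geq 3 > d(G)$ automatically.

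In the minimally generated regime $d=d(G)\geq 3$ the redundancy argument breaks down, so I would use a structural reduction. First, I would prove a $\Delta_d$-analog of Lemma \ref{norsol}: if $N$ is a non-trivial normal soluble subgroup of $G$, then connectivity of $\Delta_d(G/N)$, together with a Gasch\"utz-type lifting of generating $d$-tuples, implies connectivity of $\Delta_d(G)$. This reduces the problem to groups $G$ with trivial soluble radical, where $\soc(G)$ is a direct product of non-abelian simple groups and $G\leq\aut(\soc(G))$. I would then induct on $|G|$ by taking a minimal normal subgroup $N\cong S^k$ with $S$ non-abelian simple, connecting vertices in $\Delta_d(G)$ via their images in $\Delta_d(G/N)$ while controlling the fibres over $N$. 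Theorem \ref{almost} supplies the almost simple base case for all but finitely many socles $S$, the exceptional ones being handled by direct inspection or absorbed by the extra flexibility of $d\geq 3$.

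The main obstacle is the inductive step when the minimal normal subgroup $N=S^k$ carries a transitive $G/N$-action on its simple factors. Here two lifts of the same generating $d$-tuple of $G/N$ differ by elements of $N$, and matching such lifts along a path in $\Delta_d(G/N)$ requires a delicate analysis of the crown-based factor structure of $G$, together with quantitative bounds on the probability that random extensions preserve generation in $S^k$. This is where I expect the heaviest technical machinery, most likely drawing on the crown and probabilistic generation techniques developed in previous work by the first author.
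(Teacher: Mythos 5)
Your treatment of the over-generated regime $d>d(G)$ is correct (fix a generating set $B$ with $|B|=d-1$; then any two vertices either lie in a common generating $d$-set $B\cup\{z\}$ or share a neighbour in $B$, so $\diam\Delta_d(G)\le 2$), and your reduction modulo soluble normal subgroups is exactly Lemma~\ref{norsol} of the paper. But for $d=d(G)\ge 3$ the proposal stops precisely where the proof begins. The step you defer --- ``connecting vertices in $\Delta_d(G)$ via their images in $\Delta_d(G/N)$ while controlling the fibres over $N$'' --- is the entire content of the theorem. The lifting lemma (Lemma~\ref{induzionenormale}) only yields that $x$ is joined to $ym$ for \emph{some} $m\in N$; one must then prove that $y$ and $ym$ lie in the same component, and no amount of ``probabilistic bounds on random extensions'' is known to do this directly. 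The paper's solution is structural: pass to a crown $I_G(N)/R_G(N)$ and the associated crown-based power $L_\delta$, introduce the auxiliary graphs $\Delta_{a_1,\dots,a_t}(L_\delta)$ and the notion of weak connectivity, and prove $t$-local connectivity for $t\ge 3$ (Corollary~\ref{semprelocallycon}) by showing that a certain subgroup of $C_{\aut(L)}(L/N)\wr\sym(\delta)$ is the whole wreath product (Lemma~\ref{cyct}). That argument needs two nontrivial inputs you do not identify: the $53/90$ probabilistic generation bound of Lemma~\ref{delu} to force primitivity of the induced subgroup of $\sym(\delta)$, and, decisively, Theorem~\ref{cln} (if $[a,b]\in N=\soc(L)$ then $[an,bm]=1$ for some $n,m\in N$), which is a recent standalone result about monolithic groups. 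Without a substitute for these, the inductive step does not close.

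A second, concrete error is the proposed base case. Theorem~\ref{almost} cannot serve that role: it concerns $\Delta_2$ of $2$-generated almost simple groups, it is asymptotic with an unspecified finite set of exceptions (its proof rests on the non-effective bounds of \cite{fgg} and \cite{lumo}), so the exceptional socles cannot be ``handled by direct inspection''; and indeed the paper's proof of Theorem~\ref{main_conn_rank} makes no use of Theorem~\ref{almost} whatsoever. In the correct argument there is no almost simple base case at all: the induction of Lemma~\ref{riduco} bottoms out in the $d$-local connectivity of crown-based powers, which holds for every $L$ once $t\ge 3$, precisely because Theorem~\ref{cln} has no exceptions.
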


Questions about connectivity of generation graphs of finite groups are often related to properties of almost simple groups. This case is not an exception, in fact the proof of Theorem \ref{main_conn_rank} relies on a result on the structure of almost simple groups proved in a recent joint work with Costantini \cite{cln}:

\begin{thm}\label{cln}
Let $G$ be a finite monolithic group and suppose that $N=\soc(G)$ is non-abelian. If $a, b$ are two elements of $G$ with the property that $[a,b]\in N,$ then there exist $n, m \in N$ such that $[an,bm]=1.$	
\end{thm}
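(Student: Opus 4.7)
The plan is to reduce Theorem \ref{cln} to the almost simple case through two successive reductions, and then to settle the almost simple case using the classification of finite simple groups.

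For the first reduction, write $N = T_1 \times \cdots \times T_k$ with each $T_i$ isomorphic to a fixed non-abelian simple group $T$, and let $\sigma, \tau \in \sym(\{1,\ldots,k\})$ be the permutations of the factors induced by $a$ and $b$. Since $[a,b] \in N$, the permutations satisfy $[\sigma,\tau]=1$, so $\langle \sigma, \tau \rangle$ acts on $\{1,\ldots,k\}$ with orbits $O_1, \ldots, O_l$; setting $N_j = \prod_{i \in O_j} T_i$ gives $N = N_1 \times \cdots \times N_l$ with each $N_j$ normalized by $a$ and $b$. For $n = n_1 \cdots n_l$ and $m = m_1 \cdots m_l$ with $n_j, m_j \in N_j$, one checks that the $N_j$-component of $[an,bm]$ depends only on $a,b,n_j,m_j$, using that distinct $N_i, N_j$ commute and that $a, b$ preserve the decomposition. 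Thus the problem decouples across orbits, and one may assume $\langle a, b \rangle$ acts transitively on $\{T_1, \ldots, T_k\}$.

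For the second reduction, embed $G \hookrightarrow \aut(N) = \aut(T) \wr \sym(\{1,\ldots,k\})$ (using that $C_G(N) = 1$ when $N$ is non-abelian and $G$ monolithic) and write $a = ((a_1, \ldots, a_k); \sigma)$, $b = ((b_1, \ldots, b_k); \tau)$. Expanding $[an, bm] = 1$ coordinate by coordinate yields a system of $k$ equations in the unknowns $(n_i), (m_i) \in \inn(T)^k$. The transitivity of $\langle \sigma, \tau \rangle$ allows one to propagate information around its cycles, so that $k-1$ of the equations determine their unknowns in terms of a chosen coordinate, leaving a single cycle consistency equation of the form $[\alpha \nu, \beta \mu] = 1$ in $\aut(T)$. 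Here $\alpha, \beta \in \aut(T)$ are cycle products of the $a_i, b_i$ and $\nu, \mu \in \inn(T)$ are the remaining free parameters; the hypothesis $[a,b] \in N$ translates to $[\alpha, \beta] \in \inn(T)$. This reduces the theorem to the almost simple case: given an almost simple group $A$ with socle $T$ and elements $\alpha, \beta \in A$ with $[\alpha, \beta] \in T$, produce $s, t \in T$ satisfying $[\alpha s, \beta t] = 1$.

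The almost simple case is equivalent to the assertion that every commuting pair in $\out(T)$ lifts to a commuting pair in $\aut(T)$. Here one invokes the classification of finite simple groups: by Schreier's conjecture $\out(T)$ is solvable, and its structure is known explicitly. When $T$ is alternating or sporadic, $|\out(T)| \leq 4$ and the lifting is essentially immediate; when $T$ is of Lie type, $\out(T)$ decomposes as a semidirect product of diagonal, field and graph automorphism subgroups, and the lift must be verified family by family. The main obstacle is precisely this final step: for classical groups of Lie type in which diagonal automorphisms interact nontrivially with graph--field automorphisms, exhibiting an actual commuting lift in $\aut(T)$ (not merely one up to conjugacy) requires a careful family-by-family choice of inner adjustments, and this detailed CFSG analysis is the technical heart of the argument in \cite{cln}.
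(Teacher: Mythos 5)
There is nothing in this paper to compare your argument against: Theorem \ref{cln} is not proved here. It is imported verbatim from the companion paper \cite{cln} (``Abelian supplements in almost simple groups''), and the authors explicitly say that the proof of Theorem \ref{main_conn_rank} \emph{relies on} that external result. So the only question is whether your proposal stands on its own as a proof, and it does not: you reduce the statement to the assertion that every commuting pair in $\out(T)$ lifts to a commuting pair in $\aut(T)$, and then you declare that verifying this ``family by family'' is ``the technical heart of the argument in \cite{cln}.'' That is precisely the content of the theorem being proved; deferring it to the reference leaves the proposal circular as a proof attempt, even though it is a fair description of the shape the actual proof must take.

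Two further points on the reduction itself, which is plausible but only gestured at. First, in the decoupling step you should record that $[\sigma,\tau]=1$ forces $\langle\sigma,\tau\rangle$ to be abelian, hence \emph{regular} on each orbit; this is what lets you identify the index set of a transitive constituent with the group $\langle\sigma,\tau\rangle$ itself and makes the coordinate bookkeeping in the second reduction feasible. Second, the claim that after eliminating $k-1$ of the coordinate equations the surviving ``cycle consistency equation'' has the form $[\alpha\nu,\beta\mu]=1$ with $\nu,\mu$ ranging over \emph{all} of $\inn(T)$ is exactly the delicate part of such wreath-product arguments: one must check that the cycle products of the free parameters $n_i$, $m_i$ are unconstrained, and that the hypothesis $[a,b]\in N$ really does descend to $[\alpha,\beta]\in\inn(T)$ for the chosen cycle products. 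Neither verification is carried out. As written, the proposal is an outline of a proof strategy (very likely the strategy of \cite{cln}), not a proof.
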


%With the help of the previous theorem, we will prove that the graph $\Delta_d(G)$ is connected whenever $d>\max\{2,d(G)\}.$ In particular the rank graph $\Delta(G)$ of $G$ is connected if $d(G)>2.$

The proof of Theorem \ref{main_conn_rank} may also offer some tools which could be helpful to address Conjecture \ref{conj}, in fact part of the proof is independent of the value of $d$.

We are now ready to present our results: in Section \ref{sec:almost} we prove Theorem \ref{almost} and in Section \ref{sec:conn_rank} we prove Theorem \ref{main_conn_rank}.

\section{The generating graph of almost simple groups}\label{sec:almost}

Let $G$ be a 2-generated almost simple group and let $S=\soc(G).$ In a recent preprint \cite{bgh}, the authors prove that if $S$ is a nonabelian simple group
and $x, y \in \aut(S)$ with $x \neq 1,$ then there exists $s \in S$ such that $\langle x, sy\rangle = \langle S, x, y\rangle.$ This allows immediately to give a complete description of the non-isolated vertices of $\Gamma_2(G).$ Namely if $1\neq x\in G,$ then $x$ is a non-isolated vertex of $\Gamma_2(G)$ if and only if
$\langle x, y, S\rangle=G$ for some $y\in G.$ In particular, if $G/S$ is cyclic, then $1$ is the unique isolated vertex of $\Gamma_2(G)$ and therefore, by \cite[Corollary 6]{spread}, $\Delta_2(G)$ is connected and has diameter at most 2. This reduces the study of the connectivity of $\Delta_2(G)$ to the case when $d(G/S)=2,$ which implies in particular that $S$ is a finite simple group of Lie type. In this case we can prove that $\Delta_2(G)$ is connected with possible finitely many exceptions, using the following recent results.

\begin{thm}[{\cite[Theorem 1.1]{fgg}}]\label{thggf}There exists an absolute constant $\varepsilon > 0$ such that the following holds. Let $S$ be a finite simple group of Lie type of large enough order, and let $x, y \in \aut(S)$ with $x\neq 1.$ Then the probability that $x$ and a random element of $Sy$ generate $\langle S, x,y\rangle$ is at least $\varepsilon.$
\end{thm}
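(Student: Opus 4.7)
The plan is to deploy the standard probabilistic generation machinery going back to Liebeck--Shalev and Guralnick--Kantor, as refined in recent work of Burness, Guralnick and Harper. Write $H=\langle S,x,y\rangle$, pick $s\in S$ uniformly at random, and note that $\langle x,sy\rangle\neq H$ iff some maximal subgroup $M$ of $H$ contains both $x$ and $sy$. The union bound yields
$$
\mathbb{P}\bigl(\langle x,sy\rangle\neq H\bigr)\ \le\ \sum_{\substack{M\lessdot H\\ x\in M}}\frac{|M\cap Sy|}{|S|}.
$$
A first cleanup is that no such $M$ can contain $S$: if $S\le M$ and $sy\in M$, then $y=s^{-1}(sy)\in M$, forcing $H=\langle S,x,y\rangle\le M$, contradiction. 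Hence $L:=M\cap S$ is proper in $S$ and $MS=H$; a coset count then gives $|M\cap Sy|=|L|$, so $|M\cap Sy|/|S|=1/[S:L]$.

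Grouping the maximal subgroups of $H$ by the $H$-conjugacy class $\mathcal{C}$ of $L$, the contribution of $\mathcal{C}$ collapses to the familiar fixed point ratio
$$
\sum_{\substack{M\in\mathcal{C}\\ x\in M}}\frac{1}{[S:L_M]}\ =\ \frac{|x^S\cap M_0|}{|x^S|},
$$
where $M_0$ is a representative of $\mathcal{C}$. Thus the task reduces to bounding the sum of these fixed point ratios, over the finitely many $H$-classes of maximal subgroups of $H$ containing $x$, by $1-\varepsilon$ for some absolute $\varepsilon>0$.

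To execute this, I would invoke the Aschbacher classification of maximal subgroups of almost simple classical groups and the Liebeck--Seitz lists in the exceptional case, then split the classes into \emph{generic} ones (for which the deep fixed point ratio estimates of Liebeck--Saxl, Liebeck--Seitz and Shalev give $|x^S\cap M_0|/|x^S|=O(|S|^{-\delta})$ uniformly in $x\neq 1$) and finitely many \emph{large} ones, typically parabolic subgroups, subspace stabilizers, and perhaps a few subfield subgroups. The generic contribution tends to $0$ as $|S|\to\infty$, so the essential game is to control the large classes uniformly.

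The main obstacle, and the heart of the argument, is precisely this control of the large classes — most severely the parabolic subgroups, in which a transvection-like involution or a short-root unipotent element of $\aut(S)$ can lie in a positive proportion of conjugates. Here one needs case-by-case character-theoretic and geometric estimates (depending on the rank and type of $S$ and on the $\aut(S)$-class of $x$) to show that their combined contribution remains below $1-\varepsilon$ once $|S|$ is sufficiently large. This last step is what pins down the absolute constant $\varepsilon$ and is the delicate part of the argument.
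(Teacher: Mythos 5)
This statement is not proved in the paper at all: it is quoted verbatim from Fulman--Garzoni--Guralnick \cite{fgg} and used as a black box, so there is no internal proof to compare yours against. Judged on its own terms, your proposal correctly sets up the standard reduction: the union bound over maximal subgroups $M$ of $H=\langle S,x,y\rangle$ containing $x$, the observation that no such $M$ contains $S$ (hence $MS=H$ and $|M\cap Sy|/|S|=1/[H:M]$ whenever the intersection is non-empty), and the collapse of each conjugacy class of maximal overgroups to a fixed point ratio. Two small inaccuracies: the ratio should be computed for the $H$-class and the $H$-action (i.e. $|x^H\cap M_0|/|x^H|$ on $H/M_0$), not with $S$-classes, since $x$ need not lie in $S$ and $M_0$ need not be $S$-invariant; and the count of conjugates containing $x$ uses that maximal subgroups are self-normalizing, which should be said.

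The genuine gap is that everything after the reduction is asserted rather than argued. The entire content of the theorem is the claim that the sum of fixed point ratios over the ``large'' classes of maximal overgroups (parabolic/subspace stabilizers, subfield and reducible-type subgroups) is at most $1-\varepsilon$ for an \emph{absolute} $\varepsilon$, uniformly over all Lie types, all ranks, all field sizes, all $1\neq x\in\aut(S)$, and all cosets $Sy$. You explicitly label this ``the delicate part'' and stop there; but without it the union bound gives no nontrivial conclusion, since for individual elements (e.g.\ a transvection, a long-root element, or a graph/field automorphism) single subspace or subfield classes already contribute fixed point ratios that do not tend to $0$, and a priori their sum could approach $1$. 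Moreover the coset aspect ($sy$ with $y$ possibly inducing a field or graph automorphism) is precisely where \cite{fgg} needs Shintani descent and coset-specific estimates, none of which appear in your outline. As written, the proposal is a correct framing of the problem, not a proof; for the purposes of this paper the honest course is to do what the authors do and cite \cite{fgg}.
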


\begin{cor}\label{corfgg} Let $S$ be a finite simple group of Lie type and suppose that $x$ and $y$ are elements of $\aut(S)$ such that $d(\langle x, y, S\rangle/S)=2.$ Consider the graph $\Lambda_{x,y}(S)$ whose parts are the cosets $xS, yS$ and where there is an edge $(xs_1,ys_2)$ if and only if $\langle S, x,y\rangle=\langle xs_1,ys_2\rangle.$ If $|S|$ is large enough, then $\Lambda_{x,y}(S)$ is connected.
\end{cor}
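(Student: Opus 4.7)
The plan is to derive the corollary from the uniform vertex-degree bound that Theorem \ref{thggf} supplies for the bipartite graph $\Lambda_{x,y}(S)$. Write $G=\langle S,x,y\rangle$. Since $d(G/S)=2$, neither $x$ nor $y$ lies in $S$, so every vertex of $\Lambda_{x,y}(S)$ is a non-identity element of $\aut(S)$, which is the hypothesis needed to invoke Theorem \ref{thggf}.

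The first step is to show that every vertex of $\Lambda_{x,y}(S)$ has degree at least $\varepsilon|S|$ once $|S|$ is large enough. Given $x_0\in xS$, apply Theorem \ref{thggf} to the pair $(x_0,y)\in\aut(S)\times\aut(S)$: as $x_0\ne 1$ and $\langle S,x_0,y\rangle=\langle S,x,y\rangle=G$, the probability that $x_0$ and a random element of $Sy=yS$ generate $G$ is at least $\varepsilon$, so $x_0$ has at least $\varepsilon|S|$ neighbors in $\Lambda_{x,y}(S)$. Symmetrically, applying Theorem \ref{thggf} to any $y_0\in yS$ in the role of the fixed non-identity element (with $x$ as the second element, the random coset element ranging over $Sx=xS$) gives the analogous bound on the $yS$-side. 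Consequently the minimum degree of $\Lambda_{x,y}(S)$ is at least $\varepsilon|S|$, and since all neighbors of any vertex lie in the same connected component, each component must meet $xS$ and $yS$ in at least $\varepsilon|S|$ elements. In particular, the number of connected components is bounded by the absolute constant $K=\lfloor 1/\varepsilon\rfloor$.

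The main obstacle is to reduce the bound of $K$ components down to $1$ when $|S|$ is large. My proposal is to use the conjugation action of $S$ on $\Lambda_{x,y}(S)$: this is a group of graph automorphisms (since conjugation preserves generation), and it permutes the (at most $K$) components, yielding a homomorphism $S\to\sym(K)$; simplicity and non-abelianness of $S$ together with $|S|>K!$ force this homomorphism to be trivial, so for $|S|$ large enough every connected component is $S$-invariant. I expect the remaining step, excluding the existence of a non-trivial $S$-invariant partition of $xS\cup yS$ into components of size at least $\varepsilon|S|$, to be the hardest: the cleanest way I see to handle it is to sharpen the probabilistic estimate of Theorem \ref{thggf} in the large-order regime so that the probability exceeds $1/2$, after which two same-side vertices must share a neighbor by a pigeonhole bound $|N(u_1)|+|N(u_2)|>|yS|$ and connectivity (in fact diameter at most $2$) follows immediately; alternatively, one may quantify the dominance of a generic $S$-conjugacy class in each coset using the regularity of semisimple elements in groups of Lie type, so that for $|S|$ large no $S$-invariant subset can have size between $\varepsilon|S|$ and $(1-\varepsilon)|S|$.
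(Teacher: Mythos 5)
Your first step is exactly the paper's: applying Theorem \ref{thggf} to each fixed vertex $x_0\in xS$ (and symmetrically to each $y_0\in yS$) gives minimum degree at least $\varepsilon|S|$, so every connected component meets each of $xS$ and $yS$ in at least $\varepsilon|S|$ elements. Up to that point the argument is correct.

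The second half, however, has a genuine gap: you never actually get from ``boundedly many components, each of density $\geq\varepsilon$ on both sides'' down to one component. The conjugation action of $S$ does permute the components and, by simplicity, fixes each of them for $|S|$ large; but that only tells you each component is a union of $S$-classes intersected with the cosets, and you yourself flag that excluding such an $S$-invariant splitting is the hard part. Neither of your two suggested ways out is carried out: Theorem \ref{thggf} gives only an absolute (possibly tiny) constant $\varepsilon$ and cannot be ``sharpened'' to $1/2$ without a different theorem, and the conjugacy-class dominance claim is not proved. The missing ingredient is a global edge count: by \cite[Theorem 1.1]{lumo} the proportion of pairs $(xs_1,ys_2)$ generating $\langle S,x,y\rangle$ tends to $1$ as $|S|\to\infty$. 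With that in hand the paper finishes in one line: if $\Lambda_{x,y}(S)$ were disconnected, write $xS=A_1\cup A_2$, $yS=B_1\cup B_2$ with all edges inside $(A_1,B_1)$ or $(A_2,B_2)$ and all four sets of size $\geq\varepsilon|S|$ by your degree bound; then the number of non-edges is at least $|A_1||B_2|+|A_2||B_1|\geq 2\varepsilon^2|S|^2$, contradicting the density tending to $1$. No invariant-partition analysis is needed. So the proposal identifies the right local estimate but is incomplete without the asymptotic generation probability result.
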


\begin{proof}
	Suppose that $\Lambda_{x,y}(S)$ is not connected. Then there exist two disjoint subsets $A_1, A_2$ of $xS$ and two disjoint subsets $B_1,B_2$ of $yS$ such that $xS=A_1\cup A_2$, $yS=B_1\cup B_2$ and all the edges of $\Lambda_{x,y}(S)$ belong either to $(A_1,B_1)$ or
	to $(A_2,B_2)$. Let $a\in A_1.$ By Theorem \ref{thggf}, there are at least $\varepsilon|S|$ elements of $|B_1|$ adjacent to $a$ in $\Lambda_{x,y}(S),$ hence $|B_1|\geq \varepsilon|S|.$
	A similar argument implies that $|A_1| \geq \varepsilon |S|$, $|A_2| \geq \varepsilon |S|$, and $|B_2| \geq \varepsilon |S|.$ %Since $xS$ is the disjoint union of $A_1$ and $A_2$, if $\varepsilon \geq 1/2$ we reach a contradiction. So we may assume $\varepsilon \leq 1/2.$ 
	The number of edges of $\Lambda_{x,y}(S)$ is at most
	$$\begin{aligned}|A_1||B_1|+(|S|-|A_1|)(|S|-|B_1|)&=|S|^2-|A_1|(|S|-|B_1|)-|B_1|(|S|-|A_1|)\\&=|S|^2-|A_1||B_2|-|B_1||A_2|
	\leq |S|^2(1-2\varepsilon^2).
	\end{aligned}
	$$
	On the other hand by \cite[Theorem 1.1]{lumo}, the number of edges of $\Lambda_{x,y}$ tends to $|S|^2$ when $|S|$ goes to infinity. So the previous inequality holds only for finitely many choices of $S.$
\end{proof}

We are now able to prove Theorem \ref{almost}.

\begin{proof}[Proof of Theorem \ref{almost}]
	As we noticed before, if $G/S$ is cyclic, then $\Delta_2(G)$ is connected. So we may assume that $S$ is a simple group of Lie type and that $d(G/S)=2.$ Let $x$ and $y$ be two non-isolated vertices of $\Gamma_2(G).$ Then $xS$ and $yS$ are non-isolated vertices of $\Gamma_2(G/S).$ Since $G/S$ is solvable, it follows from \cite[Theorem 1]{CL3} that
	$\Delta_2(G/S)$ is connected. Hence $\Gamma_2(G/S)$ contains a path
	$x_1S=xS-x_2S-\dots-x_rS=yS.$ If $|S|$ is large enough, it follows from Corollary \ref{corfgg} that, for $1\leq i \leq r-1,$ all the elements of $x_iS \cup x_{i+1}S$ belong to the same connected component of $\Gamma_2(G)$. This implies in particular that $x$ and $y$ belong to the same connected component of $\Gamma_2(G).$
\end{proof}

\section{Connectivity of the rank graph}\label{sec:conn_rank}

The proof of Theorem \ref{main_conn_rank} will require some preliminary lemmas.

\

Given a subset $X$ of a finite group $G,$ we will
denote by $d_X(G)$ the smallest cardinality of a set of elements of $G$ generating $G$
together with the elements of $X.$ When $X=\{x\}$ is a singleton, we just write $d_x(G)$ in place of $d_{\{x\}}(G).$
We need to recall an auxiliary result, which generalises an
argument due to Gaschütz \cite{g2}.

\begin{lemma}[{\cite[Lemma 6]{CL3}}] \label{modgg} Let $X$ be a subset of $G$	and $M$ a normal subgroup of $G$ and suppose that	$\langle g_1,\dots,g_r, X, M\rangle=G.$	If $r\geq d_X(G),$ then we can find $n_1,\dots,n_r\in M$ so that $\langle g_1n_1,\dots,g_rn_r,X\rangle=G.$ \end{lemma}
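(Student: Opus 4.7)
My plan is to adapt Gasch\"utz's classical lifting argument, proving the statement by induction on $|M|$; the case $M=1$ is trivial.

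For the inductive step, I would choose a minimal normal subgroup $N$ of $G$ contained in $M$ and pass to $G/N$. The images $\bar g_i$ of $g_i$ and $\bar X$ of $X$ in $G/N$ satisfy $\langle \bar g_1,\dots,\bar g_r,\bar X,\bar M\rangle = G/N$, and since any set of elements generating $G$ together with $X$ projects to one generating $G/N$ together with $\bar X$, one has $d_{\bar X}(G/N)\le d_X(G)\le r$. Applying the inductive hypothesis to the pair $(G/N,M/N)$ then produces $n_1,\dots,n_r\in M$ with $\langle g_1n_1,\dots,g_rn_r,X,N\rangle=G$; replacing each $g_i$ by $g_in_i$, I reduce the problem to the case where $M$ is a minimal normal subgroup of $G$.

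Now assume $M$ is minimal normal. If $M$ is abelian, it is an irreducible $\mathbb F_p[G]$-module for some prime $p$. Setting $H=\langle g_1n_1,\dots,g_rn_r,X\rangle$, the equality $HM=G$ together with the abelianness of $M$ forces $H\cap M$ to be normalised by $HM=G$, so irreducibility gives either $H=G$ (done) or $H$ a complement to $M$. The classical Gasch\"utz counting argument, using that all complements are $M$-conjugate, then shows that the number of tuples $(n_1,\dots,n_r)\in M^r$ producing a proper $H$ is strictly less than $|M|^r$ provided $r\ge d_X(G)$, so at least one good tuple exists.

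The main obstacle is the non-abelian case, in which $M=T_1\times\cdots\times T_k$ is a direct product of pairwise isomorphic non-abelian simple groups transitively permuted by $G$. Here $H\cap M$ need not be normal in $G$, so the clean module-theoretic argument breaks down and one must classify proper supplements of $M$ more carefully. The key point is that $M$ has trivial Frattini subgroup and a restricted family of proper supplements, which can be parametrised via their projections to each simple factor; one then bounds the number of bad tuples by summing over these supplements up to $M$-conjugacy and invokes $r\ge d_X(G)$ to conclude existence of a tuple $(n_1,\dots,n_r)$ with $\langle g_1n_1,\dots,g_rn_r,X\rangle=G$. This non-abelian chief factor step is where the proof departs most from the classical soluble setting and is the technical heart of the lemma.
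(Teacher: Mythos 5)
Note first that the paper gives no proof of this lemma: it is quoted verbatim from \cite[Lemma 6]{CL3}, where it is proved by a direct extension of Gasch\"utz's original counting argument. That argument is uniform and needs no chief-factor analysis: since $r\geq d_X(G)$ one may fix a reference tuple $(h_1,\dots,h_r)$ with $\langle h_1,\dots,h_r,X\rangle=G$, and then compare, for every subgroup $K$ with $X\subseteq K$ and $KM=G$, the number of tuples in $g_1M\times\cdots\times g_rM$ generating $K$ together with $X$ against the corresponding number for $h_1M\times\cdots\times h_rM$. Both families of counts sum to $|M|^r$, and for each proper such $K_0$ the number of tuples landing inside $K_0$ equals $|M\cap K_0|^r$ in both cases (each coset of $M$ meets $K_0$ because $K_0M=G$); an induction over the subgroup lattice then forces the two counts to agree for every $K$, in particular for $K=G$, where the reference count is positive. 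No reduction to a minimal normal subgroup, and no distinction between abelian and non-abelian $M$, is needed.

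Your proposal, by contrast, has a genuine gap exactly where you locate "the technical heart": the non-abelian minimal normal case is not proved, only described. Saying that one "parametrises the proper supplements" and "bounds the number of bad tuples by summing over these supplements" is not an argument; to close it you would need the sum of $|K\cap M|^r$ over the relevant proper supplements $K$ to be less than $|M|^r$, and establishing such a bound for $M$ a product of non-abelian simple groups is a deep matter (it is essentially the content of results like \cite[Theorem 2.2]{dl}, quoted here as Lemma \ref{delu}, which rest on detailed information about maximal subgroups of almost simple groups). The lemma itself requires none of this. A secondary error: in the abelian case you assert that all complements of $M$ are $M$-conjugate, which is false in general (the $M$-orbits on complements are parametrised by $H^1(G/M,M)$, e.g.\ already in $C_p\times C_p$); the abelian case can be repaired, but not via that claim. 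I would recommend replacing the whole scheme by the bijection/counting argument sketched above.
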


Let $L$ be a finite monolithic group with non-abelian socle $N$. %Then $N\cong S^u$ for some finite non-abelian simple group $S$ and some integer $u$. 
The crown-based power of $L$ of size  $k$ is the subgroup $L_k$ of $L^k$ defined by
$$L_k=\{(l_1, \ldots , l_k) \in L^k  \text{ : } l_1 \equiv \cdots \equiv l_k \ {\mbox{mod}\text{ } } N \}.$$

Now we fix a positive integer $t\geq d(L)$ and we denote by $\delta(L,t)$  the maximum integer $k$ \st $L_k$ is $t$-generated (sometimes we write just $\delta$ when there is no possibility of confusion). We introduce some further notation. Let $M=N^k=\soc L_k.$
If $a\in L$ and $m=(n_1,\dots,n_k)\in N^k,$ then we set 
$a\circ m=(an_1,\dots,an_k)\in L_k.$ 

Let $a_1,\dots,a_t\in L$ such that $L=\gen{a_1,a_2,\dots,a_t}$. For any integer $\eta\leq \delta(L,t),$ we define a graph, denoted by
$\Gamma_{a_1,\dots,a_t}(L_\eta),$ whose vertex set is
$$\{a_i \circ m\mid 1\leq i \leq t, m\in N^\eta\},$$
with an edge between $a_i \circ m_i$ and $a_j\circ m_j$ \ifa 
$i\neq j$ and there exist $m_u \in N^\eta$, for $u\notin\{i,j\},$   such that
$$L_\eta=\langle a_i\circ m_i, a_j\circ m_j, a_u\circ  m_u \mid u\notin \{i,j\}\rangle.$$ We denote by $\Delta_{a_1,\dots,a_t}(L_\eta)$ the graph obtained from $ \Gamma_{a_1,\dots,a_t}(L_\eta)$ by removing the isolated vertices. %Moreover denote by $\Lambda_{a_1}$ the non-isolated vertices of $\Gamma_d(L_\delta)$ of the form $a_1\circ m$ for some $\mu \in N^d.$
\begin{defn} Let $\eta\leq \delta(L,t).$ We say that the
 the graph  $\Delta_{a_1,a_2,\dots,a_t}(L_\eta)$ is weakly connected	if, for $1\leq i\leq t,$ whenever $a_i \circ m_1$ and $a_i\circ m_2$ are two vertices of this graph, there exists $m\in M$ such that $a_i \circ m_1$ and $(a_i\circ m_2)^m$ belong to the same connected component. Moreover we say that crown-based-power $L_\eta$ is $t$-locally connected if the graph  $\Delta_{a_1,a_2,\dots,a_t}(L_\eta)$ is weakly connected whenever   $\langle a_1, a_2,\dots,a_t\rangle=L.$
\end{defn}

We are going to prove a series of intermediate results with the ultimate aim of proving that if $t\geq 3$ then $L_\eta$ is $t$-locally connected for any $\eta\leq \delta(L,t).$ It suffices to concentrate our attention on the particular case when $\eta=\delta(L,t).$ For sake of simplicity, we just write $\delta$ instead of $\delta(L,t).$ Fix $a_1,\dots,a_t\in L$ such that $L=\gen{a_1,a_2,\dots,a_t}$ and let
$$\Omega=\{(a_1n_1,\dots,a_tn_t) \mid n_1,\dots,n_t \in N \text { and } \langle a_1n_1,\dots,a_tn_t\rangle=L\}$$ and let $X:=C_{\aut(L)}(L/N).$ Then $X$ acts on $\Omega$ by setting 
$$(a_1n_1,\dots,a_tn_t)^\gamma=((a_1n_1)^\gamma,\dots,(a_tn_t)^\gamma)$$
and the following holds (see \cite[Section 2]{dv}).
\begin{lemma}\label{primo}Let $\eta\leq \delta$ and let $m_i=(n_{i1},\dots,n_{i\eta}),$ for $1\leq i\leq t.$
	We have $$\langle a_1\circ m_1,\dots, a_t\circ m_t \rangle=L_\eta$$
	\ifa the columns of the matrix
	$$\begin{pmatrix}a_1n_{11}&\cdots& a_1n_{1 \eta}\\a_2n_{21}&\cdots& a_2n_{2\eta}\\\vdots& \cdots &\vdots\\ a_tn_{t1}&\cdots& a_tn_{t \eta}
	\end{pmatrix}$$
	belong to different orbits of $X$ on $\Omega.$
In particular, if $\eta=\delta$ then	
	$$\langle a_1\circ m_1,\dots, a_t\circ m_t \rangle=L_\delta$$
	\ifa the columns of the matrix
	$$\begin{pmatrix}a_1n_{11}&\cdots& a_1n_{1 \delta}\\a_2n_{21}&\cdots& a_2n_{2\delta}\\\vdots& \cdots &\vdots\\ a_tn_{t1}&\cdots& a_tn_{t \delta}
	\end{pmatrix}$$
	give a complete set of representatives for the orbits of $X$ on $\Omega.$
\end{lemma}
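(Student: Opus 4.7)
The plan is to view $H:=\langle a_1\circ m_1,\dots,a_t\circ m_t\rangle$ as a subdirect product inside the fibre product $L_\eta\leq L^\eta$, and to recognize when this subdirect product fills out all of $L_\eta$. First I would remark that, by the very definition of the crown-based power, $H$ is automatically contained in $L_\eta$. Let $\pi_j\colon L_\eta\to L$ denote the projection onto the $j$-th factor; then $\pi_j(H)=\langle a_1n_{1j},\dots,a_tn_{tj}\rangle$, so a necessary condition for $H=L_\eta$ is that every column of the matrix lies in $\Omega$. From now on I would assume that this is the case, so that $H$ is a genuine subdirect product of the $\eta$ copies of $L$ sitting inside $L_\eta$.

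Under this assumption, $H=L_\eta$ is equivalent to $H\supseteq M=N^\eta$, and this is the content that can be extracted from the Goursat-style analysis in \cite[Section 2]{dv}. The key point is that since $N$ is a non-abelian simple or characteristically simple socle, any subgroup of $L\times L$ which projects onto $L$ in both coordinates and meets $N\times N$ non-trivially must already contain $N\times N$; and the only proper subdirect complement is the graph of an automorphism of $L$ that fixes $L/N$ pointwise, i.e.\ of an element of $X=C_{\Aut(L)}(L/N)$. Applying this coordinate-by-coordinate, $H$ fails to contain $N^\eta$ precisely when two of the projections are glued via some $\gamma\in X$: explicitly, when there exist indices $j\neq k$ and $\gamma\in X$ such that $(a_in_{ij})^\gamma=a_in_{ik}$ for every $i$. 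This is exactly the failure condition for the $j$-th and $k$-th columns to lie in distinct $X$-orbits on $\Omega$, establishing the main equivalence.

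For the ``in particular'' part, I would invoke the definition of $\delta=\delta(L,t)$. On the one hand, since $L_\delta$ is $t$-generated, there exists a choice of $m_1,\dots,m_t$ with $\langle a_1\circ m_1,\dots,a_t\circ m_t\rangle=L_\delta$; by the first part, this exhibits $\delta$ columns in pairwise distinct $X$-orbits of $\Omega$. On the other hand, if $X$ had strictly more than $\delta$ orbits on $\Omega$, picking one representative per orbit and arranging them as $\delta+1$ columns would, again by the first part, realise $L_{\delta+1}$ as $t$-generated, contradicting the maximality of $\delta$. Hence $X$ has exactly $\delta$ orbits on $\Omega$ and the columns must form a complete set of orbit representatives.

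The main obstacle, and the only genuinely non-formal step, is the subdirect-product criterion invoked in the second paragraph: the identification of the ``gluings'' of two projections of $H$ with elements of $X$. This is the technical input borrowed from \cite[Section 2]{dv}; once it is available the rest is a straightforward translation between generating tuples modulo $X$ and generators of $L_\eta$, together with a counting argument for the case $\eta=\delta$.
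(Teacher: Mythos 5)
Your argument is essentially the one underlying the citation: the paper offers no proof of this lemma, referring instead to \cite[Section 2]{dv}, and the subdirect-product/Goursat analysis you reconstruct (pairwise projections to $L_2$ are either all of $L_2$ or the graph of an element of $X=C_{\Aut(L)}(L/N)$, which is exactly the ``same $X$-orbit'' condition on two columns) is the standard proof of that cited result; your counting argument for the case $\eta=\delta$ is also correct. One sentence is misstated: a subdirect subgroup of $L\times L$ meeting $N\times N$ non-trivially need \emph{not} contain $N\times N$ (the diagonal is a counterexample); the correct trichotomy, which is what you actually use in the next clause, is that such a subgroup either contains $N\times N$ (hence is all of $L_2$) or intersects $N\times 1$ and $1\times N$ trivially and is then the graph of an automorphism centralising $L/N$. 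With that wording fixed, and noting that the existence of a generating tuple of the prescribed form $(a_1\circ\mu_1,\dots,a_t\circ\mu_t)$ in the ``in particular'' step follows from Lemma~\ref{modgg} applied to $M=N^\delta$, your proof is sound.
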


Let $M=N^\delta$ and fix $\mu_1,\dots,\mu_t \in M$ such that $L_\delta=\langle a_1 \circ \mu_1,\dots, a_t \circ \mu_t\rangle.$ Set $\mu_i=(c_{i1},\dots,c_{i\delta}).$ The following is a consequence of Lemma \ref{primo}.
\begin{lemma}\label{edge}
	Recall that $X:=C_{\aut(L)}(L/N)$ and let $K:=X\wr \sym(\delta)\leq \aut(L_\delta)$. If 
	$\langle a_1\circ m_1,\dots, a_t\circ m_t\rangle=L_\delta,$ then there exists 
	an element $\gamma$ in $K$ \st 
	$a_i\circ m_i=(a_i\circ \mu_i)^\gamma,$ for $1\leq i\leq t.$
\end{lemma}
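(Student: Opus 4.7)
The plan is to read off $\gamma$ directly from the matrix picture provided by Lemma~\ref{primo}. Write, for each $i$, $m_i=(n_{i1},\dots,n_{i\delta})$ and form the two $t\times\delta$ matrices
$$A_\mu=\begin{pmatrix}a_1c_{11}&\cdots& a_1c_{1\delta}\\ \vdots& \cdots &\vdots\\ a_tc_{t1}&\cdots& a_tc_{t\delta}\end{pmatrix},\qquad A_m=\begin{pmatrix}a_1n_{11}&\cdots& a_1n_{1\delta}\\ \vdots& \cdots &\vdots\\ a_tn_{t1}&\cdots& a_tn_{t\delta}\end{pmatrix}.$$
Since $L_\delta=\langle a_1\circ\mu_1,\dots,a_t\circ\mu_t\rangle=\langle a_1\circ m_1,\dots,a_t\circ m_t\rangle$ and $\eta=\delta$, Lemma~\ref{primo} guarantees that the columns of $A_\mu$ \emph{and} the columns of $A_m$ are both complete (in particular, irredundant) sets of representatives for the orbits of $X$ on $\Omega$.

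Now I would pair up the two systems of representatives. Because they label the same set of orbits, there is a unique permutation $\sigma\in\sym(\delta)$ with the property that, for every $j$, the $j$-th column of $A_m$ and the $\sigma^{-1}(j)$-th column of $A_\mu$ lie in the same $X$-orbit on $\Omega$. For each $j$, pick an $x_j\in X$ realising this orbit equivalence, so that
$$\bigl((a_1 c_{1,\sigma^{-1}(j)})^{x_j},\dots,(a_t c_{t,\sigma^{-1}(j)})^{x_j}\bigr)=(a_1 n_{1j},\dots,a_t n_{tj}).$$
The choice of $x_j$ is unique up to the (trivial, by a standard argument on orbit representatives) stabiliser in $X$ of that column, but we only need existence.

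Finally I would assemble these into the element $\gamma=\bigl((x_1,\dots,x_\delta),\sigma\bigr)\in X\wr\sym(\delta)=K$. Under the natural action of $K$ on $L^\delta$ (preserving $L_\delta$), one has
$$(a_i\circ\mu_i)^\gamma=\bigl((a_i c_{i,\sigma^{-1}(1)})^{x_1},\dots,(a_i c_{i,\sigma^{-1}(\delta)})^{x_\delta}\bigr),$$
and by the defining property of the $x_j$ the $j$-th entry equals $a_i n_{ij}$ simultaneously for every $i$. Hence $(a_i\circ\mu_i)^\gamma=a_i\circ m_i$ for all $i=1,\dots,t$, as required.

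The only conceptual point — and it is really the whole content of the lemma — is the simultaneity across the rows: a priori one might need a separate element of $X$ for each pair $(i,j)$, which would not assemble into an element of $K$. What rescues the argument is that Lemma~\ref{primo} gives the equivalence \emph{column by column} (i.e.\ as elements of $\Omega$, with all $t$ entries moving together under a single element of $X$), so the same $x_j$ works for every row $i$. The rest is bookkeeping on the wreath-product action convention; beyond that I do not anticipate any obstacle.
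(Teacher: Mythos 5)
Your argument is correct and is exactly the intended derivation: the paper states Lemma~\ref{edge} without proof as an immediate consequence of Lemma~\ref{primo}, and your matching of the two complete systems of orbit representatives by a permutation $\sigma$ together with column-wise elements $x_j\in X$ fills in precisely those details. You also correctly isolate the one real point, namely that the $X$-equivalence given by Lemma~\ref{primo} acts on whole columns of $\Omega$ at once, so a single $x_j$ serves all rows and the data assemble into an element of $X\wr\sym(\delta)$.
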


% \begin{lemma}\label{edge2}	Let $X:=\C_{\aut(L)}(L/N)$ and let $K:=X\wr \sym(\delta)\leq \aut(L_\delta)$. If $a_1\circ m_1\in \Lambda_{a_1},$ then there exists 	%	$\langle a_1\circ m_1,\dots, a_t\circ m_t\rangle=L_\delta,$ then there exists 	an element $\gamma$ in $K$ \st $a_1\circ m_1=(a_1\circ \mu_1)^\gamma.$
	%	$a_i\circ m_i=(a_i\circ \mu_i)^\gamma,$ for $1\leq i\leq t.$\end{lemma}
%\begin{proof}	It follows from Lemmas \ref{modg} and \ref{edge}.
%\end{proof}

To every $1\leq i \leq t,$ we may associate a partition $\pi_i=\{\Omega_{i,1},\dots,\Omega_{i,r_i}\}$ of the set $\{1,\dots,\delta\}$ by setting that $j_1$ and $j_2$ belong to the same part if and only if
$a_ic_{i,j_2}=(a_ic_{i,j_1})^\gamma$ for some $\gamma \in X$. %(and in this case we will denote by $\gamma_{i,j_1,j_2}$ an element of $X$ such that $a_ic_{i,j_2}=(a_ic_{i,j_1})^{\gamma_{i,j_1,j_2}}.$
%For $1 \leq i \leq t,$ let $a_i\nu_{i1},\dots,a_i\nu_{ir_i}$ be a set of representatives for the $X$-orbits on $\{a_ic_{i1},\dots,a_ic_{i\delta}\},$ and for $1\leq j\leq r_i,$ let $\Omega_{i,j}=\{k \mid a_ic_{ik}=(a_i\nu_{ij})^\gamma {\text { for some }} \gamma \in X\}.$ In this way to any $1\leq i \leq t$ a partition $\pi_i=\{\Omega_{i1},\dots,\Omega_{ir_i}\}$ of $\{1,\dots,\delta\}$ is associated. 
The set $\mathcal P$ of partitions of $\{1,\dots,\delta\},$ ordered by refinement, is a lattice with smallest element $\pi_{\text{min}}=\{\{1,\dots,\delta\}\}$ and maximal element $\pi_{\text{max}}=\{\{1\},\dots,\{\delta\}\}.$

\begin{lemma}\label{cyct}
	Suppose that the following hold:
	\begin{enumerate}\item  $\pi_{\text{min}}=\pi_1\wedge \dots \wedge \pi_t$,
		\item for any $\gamma\in X,$ there exist $i \in \{1,\dots,t\},$ 
		$j \in \{1,\dots,\delta\}$, $\beta\in X$ and $n\in N$
		such that $[a_ic_{ij},\gamma^\beta n]=1.$
	\end{enumerate} Then $\Delta_{a_1,\dots,a_t}(L_\delta)$
	is weakly connected.
	%  	Then there is a connected component of $\Gamma_d(G)$ containing $\Lambda_{a_1}.$
\end{lemma}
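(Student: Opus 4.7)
The plan is to translate the graph-theoretic statement into a group-theoretic one and then verify it using the two hypotheses. Setting $S_i:=\mathrm{Stab}_K(a_i\circ\mu_i)$ and $H:=\langle S_1,\dots,S_t,M\rangle\le K$, I will show that weak connectedness of $\Delta_{a_1,\dots,a_t}(L_\delta)$ is equivalent to $H=K$. By Lemma~\ref{edge} the non-isolated $i$-type vertices are exactly the elements $(a_i\circ\mu_i)^\gamma$ for $\gamma\in K$, and by the same lemma an edge $(a_i\circ\mu_i)^{\gamma_1}\sim(a_j\circ\mu_j)^{\gamma_2}$ (with $i\neq j$) exists iff $\gamma_1^{-1}\gamma_2\in S_iS_j$. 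Iterating, the set of $i$-type vertices reachable from $(a_i\circ\mu_i)^{\gamma_0}$ by walks is the coset $\gamma_0\langle S_1,\dots,S_t\rangle S_i$. Since conjugation by $M$ corresponds to right multiplication of the parameter $\gamma$ by $M$, weak connectedness reduces to the equality $H=K$.

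Next I would use condition (1) to show that $H$ surjects onto $\sym(\delta)=K/X^\delta$. The image of $S_i$ in $\sym(\delta)$ is the partition-stabiliser $\prod_\ell\sym(\Omega_{i,\ell})$, which contains every transposition between elements of a common block of $\pi_i$. Hypothesis (1) says the equivalence relation generated by the $\pi_i$'s is universal on $\{1,\dots,\delta\}$, so the ``common-block'' graph is connected and its transpositions generate $\sym(\delta)$. Hence $H$ surjects onto $\sym(\delta)$, reducing the remaining task to showing $X^\delta\subseteq H$.

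The crux is to prove $W:=H\cap X^\delta$ equals $X^\delta$. Passing to the quotient by $M=\inn(N)^\delta$, write $\bar X:=X/\inn(N)$, $\bar W:=W/M\le\bar X^\delta$, $\bar H:=H/M\le Q:=\bar X\wr\sym(\delta)$, and $\bar C_{ij}:=\overline{C_X(a_ic_{ij})}$. From $S_i\cap X^\delta\supseteq\prod_j C_X(a_ic_{ij})$ we get $\bar W\supseteq\prod_j\bar C_{ij}$ for every $i$. Moreover, since $\bar W\lhd\bar H$ and $\bar H$ surjects onto $\sym(\delta)$, for any lift $\tilde\sigma=\bar\xi\sigma\in\bar H$ of $\sigma\in\sym(\delta)$ one has $\sigma(\bar W)=\bar\xi^{-1}\bar W\bar\xi$: this is a ``$\sym(\delta)$-invariance up to $\bar X^\delta$-conjugation''. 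Condition (2) translates into $\bar X=\bigcup_{i,j,\bar\beta}\bar\beta^{-1}\bar C_{ij}\bar\beta$, and combined with the classical fact that a group is never the union of the conjugates of a proper subgroup this implies that the normal closure in $\bar X$ of $\bigcup_{i,j}\bar C_{ij}$ equals $\bar X$. The goal is then to show that every singleton element $e_k(\bar y)=(1,\dots,\bar y,\dots,1)$ with $\bar y\in\bar X$ lies in $\bar W$, which will give $\bar W=\bar X^\delta$ since such singletons generate $\bar X^\delta$.

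The main obstacle is this last step. Given $\bar y\in\bar X$, condition (2) furnishes $\bar\beta\in\bar X$, $i,j$ and $\bar c\in\bar C_{ij}$ with $\bar y=\bar\beta\bar c\bar\beta^{-1}$, so $e_j(\bar c)\in\bar W$; conjugating by a lift $\tilde\sigma=\bar\xi\sigma\in\bar H$ with $\sigma(j)=k$ and $\bar\xi_k\equiv\bar\beta\pmod{C_{\bar X}(\bar c)}$ would yield $e_k(\bar y)$. The available first-coordinates $\bar\xi_k$ form a single coset of $\pi_k(\bar W)$, so the argument must be iterative: each newly constructed singleton in $\bar W$ enlarges $\pi_k(\bar W)$, and hence the set of realisable conjugators; the coverage of $\bar X$ provided by condition (2) then forces the process to terminate with $\bar W=\bar X^\delta$. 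Organising this bootstrap cleanly, and verifying that it indeed exhausts $\bar X^\delta$, is the technical heart of the proof.
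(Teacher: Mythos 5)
Your overall strategy is the same as the paper's: translate weak connectedness into the statement that the stabilizers $C_K(a_i\circ\mu_i)$ together with $M$ generate $K=X\wr\sym(\delta)$, use condition (1) to obtain the $\sym(\delta)$-quotient and condition (2) to obtain the base group $X^\delta$. Your treatment of the $\sym(\delta)$ part is correct (and in fact slightly cleaner than the paper's, which goes through transitivity, primitivity and a theorem of Jordan, whereas you observe directly that the within-block transpositions form a connected graph by condition (1) and hence generate $\sym(\delta)$).

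However, the step you yourself flag as ``the technical heart'' is a genuine gap, and the bootstrap you sketch is not the right mechanism: enlarging $\pi_k(\bar W)$ one singleton at a time gives no a priori reason for the process to terminate at all of $\bar X$, and your appeal to the covering fact only yields that the \emph{normal closure} in $\bar X$ of $\bigcup_{i,j}\bar C_{ij}$ is $\bar X$, which does not by itself place the individual singletons $e_k(\bar y)$ in $\bar W$. The missing idea is to apply the fact that a finite group is not the union of the conjugates of a proper subgroup \emph{to the subgroup $X_1\cap HM$ of the first coordinate factor $X_1\cong X$}, rather than to a normal closure. Concretely: condition (2) puts $\tau_{j,\gamma^{\beta}n}$ in $C_K(a_i\circ\mu_i)\leq H$ for some $i,j,\beta,n$, hence $\tau_{j,\gamma^{\beta}}\in HM$; conjugating by any element of $HM$ whose image in $\sym(\delta)$ sends $j$ to $1$ (such elements exist by the transitivity you already established) produces $\tau_{1,\gamma^{\beta'}}\in HM$ for some $\beta'\in X$. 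Thus the single subgroup $X_1\cap HM$ meets every conjugacy class of $X$, so its conjugates cover $X_1$ and therefore $X_1\cap HM=X_1$. Transitivity then moves $X_1$ to every coordinate, giving $X^\delta\leq HM$ and hence $K=HM$ in one stroke, with no iteration needed. With this replacement your argument closes; without it, the proof is incomplete at precisely the point where condition (2) must be exploited.
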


\begin{proof}
	We want to prove that, for $1\leq i \leq t,$ if $a_i\circ \tilde m$ is a vertex of $\Delta,$ then there exists $m\in M$ such that
	$(a_i\circ\tilde m)^m$ and $a_i \circ \mu_i$ belong to the same connected component. It suffices to prove this statement for $i=1.$ Let $\Lambda$ be the connected component of $\Delta=\Delta_{a_1,\dots,a_t}(L_\delta)$ containing
	$a_1\circ \mu_1$ and let $H$ be the setwise stabilizer
	of $\Lambda$ in $K$ and let $J=HM.$ Then $\langle C_1,\dots,C_t\rangle \leq H\leq J,$ 
	where $C_i=C_K(a_i\circ \mu_i).$ For every part $\Omega_{ij}$ of $\pi_i$, we can fix $a_i\nu_{ij} \in a_iN$ such that, for $k\in\Omega_{ij}$, $a_ic_{ik}$ is $X$-conjugate to $a_i\nu_{ij}.$ It turns out that there exists $\psi_i \in X^\delta$ such that
	$$C_i=\left(\prod_{1\leq j\leq r_i}C_X(a_i\nu_{ij})\wr \sym(\Omega_{ij})\right)^{\psi_i}.$$
	%Define $H:=\langle C_1,\dots,C_t\rangle.$ %Then the connected component of $\Gamma$ containing $a_1\circ \mu_1,$ contains also $(a_1\circ \mu_1)^h$ for $h \in H.$
	Consider the natural projection $\rho\colon K\to\sym(\delta)$. The assumption $\pi_{\text{min}}=\pi_1\wedge \dots \wedge \pi_t$ ensures
	that $$\langle C_1^\rho, \dots  C_t^\rho\rangle=\left\langle \prod_{1\leq j\leq r_1} \sym(\Omega_{1j}),\dots,\prod_{1\leq j\leq r_t} \sym(\Omega_{tj})\right\rangle$$ is a transitive subgroup of $\sym(\delta).$
	As a consequence, $H^\rho$ is transitive too.

	Let $B$ be a block of the action on $H^\rho$ and assume $|B|>1.$ %Therefore $\Omega_{11}\subseteq B$. 
	If $\Omega_{ij}\cap B\neq\emptyset,$ then $\Omega_{ij}\subseteq B$, so, again from
	$\pi_{\text{min}}=\pi_1\wedge \dots \wedge \pi_t,$ we deduce that $B=\{1,\dots,\delta\}$ and so $H^\rho$ is a primitive subgroup of $\sym(\delta)$. If $\delta \neq 1,$ then  the condition $\pi_{\text{min}}=\pi_1\wedge \dots \wedge \pi_t$ implies that
	there exists $i\in \{1,\dots,t\}$ such that $\pi_i \neq \pi_{\text{max}},$ hence $|\Omega_{i,j}|\geq 2$ for some $1\leq j\leq r_i$. Thus $H^\rho$
	contains a transposition, and therefore by \cite[Theorem 7.4B]{DM} we conclude $H^\rho=\sym(\delta)$. Of course this implies $J^\rho=\sym(\delta)$.
	
	For any $1\leq j\leq \delta$ and $\gamma \in X,$ let $\tau_{j,\gamma}:=(1,\dots,1,\gamma,1,\dots,1)\in X^\delta,$ with $\gamma$ in the $j$-th position.
	%Now we will prove that $H=K$. 
If $\gamma \in X$, then $[a_ic_{ij},\gamma^\beta n]=1$ for some $1\leq i\leq t,$ $1\leq j\leq \delta$ and $\beta\in X$.   Since $[a_i\circ \mu_i,\tau_{j,\gamma^\beta n}]=1,$ we have $\tau_{j,\gamma^\beta n}\in H.$ It follows that $\tau_{j,\gamma^\beta}\in J=HM$ for every $\gamma \in X$ and some $\beta\in X$. Since 
$J^\rho$ is transitive, we conclude that for each $\gamma\in X$, there exists $\beta'\in X$ \st $\tau_{1,\gamma^{\beta'}}\in J$. If $X_1:=\{(x,1,\dots,1)\mid x\in X\}\cong X$, we have that $X_1\cap J$ is a subgroup of $X$ which contains a representative for each conjugacy class of $X$, and therefore $X_1\cap J=X_1$ and by the transitivity of $J^\rho$, we conclude that $X^\delta$ is contained in $J$ and therefore $K=J=HM.$ Now assume that $a_1\circ \tilde m$ is a vertex of $\Delta.$ By Lemma \ref{edge}, there exists $k\in K$ such that $a_1\circ \tilde m=(a_1\circ \mu_1)^k.$ Write $k=hm$ with
$h\in H$ and $m\in M.$ Then $(a_1\circ \tilde m)^{m^{-1}}=(a_1\circ \mu_1)^h\in \Lambda.$ 
\end{proof}

%Let $G$ be a finite group, $t$  a positive integer and  $\Delta_t(G)$ the graph whose vertices are the elements of $G$ and where $g_1,g_2$ are adjacent if and only if there is a generating set of cardinality $t$ containing $g_1, g_2.$ Moreover let $\Gamma_t(G)$ be the graph obtained from $\Delta_t(G)$ by deleting the isolated vertices. Clearly if $t>d(G),$ then $\Delta(G)$ has no isolated vertex and is connected. Let $g=x\circ m$ be a non-isolated vertex of $\Delta_d(\Gamma_\delta)$ and assume that $a$ is an isolated vertex of $\Delta_d(L).$ By Lemma \ref{modg}, $g$ is adjacent to $a\circ \mu$ for a suitable $\mu \in N^\delta.$ Choose (arbitrarily) $a_2,\dots,a_d$ so that
%	$\langle a=a_1,a_2,\dots,a_d\rangle=L$. It  $\Gamma:=\Gamma_{a_1,\dots,a_d}(L_\delta)$ is connected, then
%	$\Gamma_d(L_\delta)$ is connected, so we want to find 	$a_1,a_2,\dots,a_d$ satisfying  conditions (1) and (2) of Lemma \ref{cyct}.

%\begin{thm}If $t>d(L),$ then $\Gamma_t(L_{\delta_t)}$ is connected. \end{thm}
%\begin{proof}	In this case we may choose $a_1,\dots,a_t$ so that $L=\langle a_1,\dots,a_{t-1}\rangle$ and $a_t=1.$ We may also assume  $n_{1,1}=1$ (in the terminology of Lemma \ref{primo}) and $n_{t,1}=1$, which immediately implies that (1) is satisfies. Moreover if $j \in \Omega_{t,k}$	then $\langle a_1,\dots,a_{t-1},n_{t,j}\rangle=L$ and therefore $\Omega_{1,1}\cap \Omega_{t,k}\neq \emptyset,$ and (2) is satisfied. \end{proof}

The previous lemma can be improved if $t\geq 3.$ For this purpose we need the following prelimary result.

\begin{lemma}\label{delu} Let $l \in L$ and suppose $d\geq \max\{2,d_l(L)\}.$	If $L=\langle l, b_1,\dots, b_d\rangle$, then the set	$\Omega(l;b_1,\dots,b_d)$ of the elements $(n_1,\dots,n_d)\in N^d$ such that $L=\langle l, b_1n_1,\dots,b_dn_d\rangle$ has cardinality at least $53|N|^d/90.$\end{lemma}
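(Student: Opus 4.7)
My starting point is the observation that for every $(n_1,\dots,n_d)\in N^d$, the images of $l,b_1n_1,\dots,b_dn_d$ in $L/N$ coincide with those of $l,b_1,\dots,b_d$ and therefore generate $L/N$. Hence the subgroup $H:=\langle l,b_1n_1,\dots,b_dn_d\rangle$ always satisfies $HN=L$, and $H=L$ fails precisely when $H$ is contained in some maximal subgroup $M$ of $L$ belonging to
\[
\mathcal{M}:=\{M<L \text{ maximal}:\ l\in M,\ MN=L\}.
\]
For a fixed $M\in\mathcal{M}$, the requirement $b_in_i\in M$ forces $n_i$ into the unique coset of $M\cap N$ in $N$ that meets $b_i^{-1}M$, a coset of size $|M\cap N|$, so $|\{(n_1,\dots,n_d):H\le M\}|=|M\cap N|^d$. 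The union bound then gives
\[
|N|^d-|\Omega(l;b_1,\dots,b_d)|\le \sum_{M\in\mathcal{M}}|M\cap N|^d = |N|^d\sum_{M\in\mathcal{M}}[L:M]^{-d}.
\]
Since $[L:M]\ge 2$ for every $M\in\mathcal{M}$ and $d\ge 2$, the task reduces to bounding $\sum_{M\in\mathcal{M}}[L:M]^{-2}$.

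The target value $37/90=1-53/90$ is exactly the sharp complement of the Menezes--Quick--Roney-Dougal bound $P(S)\ge 53/90$ on the probability that two random elements generate a non-abelian finite simple group $S$ (attained at $S=\mathrm{Alt}(6)$). In the cleanest case $L=N=S$ simple, $|\Omega(l;b_1,\dots,b_d)|/|N|^d$ is the probability $P_{d,l}(S)$ that $l$ together with $d$ uniformly random elements generate $S$, and by monotonicity $P_{d,l}(S)\ge P_d(S)\ge P_2(S)\ge 53/90$, so the lemma follows at once. For a general monolithic $L$ with socle $N\cong S^k$, I would replace the union bound by the exact M\"obius identity
\[
|\Omega(l;b_1,\dots,b_d)|=\sum_{\substack{K\le L \\ l\in K,\,KN=L}} \mu(K,L)\,|K\cap N|^d,
\]
classify the subgroups $K$ (and hence the maximal subgroups $M\in\mathcal{M}$) according to the type of $K\cap N$ inside $N$ — product, diagonal, or imprimitive — and estimate each family by applying Hall's generation formula to the simple factors of $N$.

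The principal obstacle is precisely the tightness of the constant $53/90$: at the extremal case $L=N=\mathrm{Alt}(6)$ the plain union bound $\sum_M[L:M]^{-2}$ already exceeds $37/90$, so one cannot terminate the argument by a naive expansion. The proof must therefore exploit the cancellations encoded by the M\"obius function on the interval $[K,L]$ in a monolithic group, leveraging the special structure of the non-abelian socle to collapse the overestimate and inherit the sharp bound from the Menezes--Quick--Roney-Dougal estimate on each simple factor of $N$.
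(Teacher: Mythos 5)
Your setup (reduction to maximal subgroups $M$ with $l\in M$ and $MN=L$, the coset count $|M\cap N|^d=|N|^d[L:M]^{-d}$, and the identification of $53/90$ as the sharp Menezes--Quick--Roney-Dougal constant for two-element generation of simple groups, attained at $\mathrm{Alt}(6)$) is the right frame, and the special case $L=N=S$ simple is correctly disposed of by monotonicity in $d$. But for the case the paper actually needs --- $L$ an arbitrary monolithic group with non-abelian socle $N\cong S^k$, $k$ and $L/N$ both possibly non-trivial --- your argument stops exactly where the work begins. You yourself observe that the union bound fails to reach $37/90$, and the proposed remedy (``replace the union bound by the exact M\"obius identity, classify the subgroups $K$ with $KN=L$ according to whether $K\cap N$ is a product, diagonal or imprimitive subgroup, and estimate each family'') is stated as a plan with no estimates carried out. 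That classification-plus-cancellation argument, executed with enough precision to recover the sharp constant $53/90$ for every monolithic $L$ and every $l$ with $d\ge\max\{2,d_l(L)\}$, is precisely the content of the result the paper cites for this lemma, namely Theorem 2.2 of Detomi--Lucchini, \emph{Probabilistic generation of finite groups with a unique minimal normal subgroup} (J.\ Lond.\ Math.\ Soc.\ 2013); it occupies a substantial part of that paper and is not a routine computation. So the proposal reduces the lemma to an unproven claim of essentially the same depth as the lemma itself.

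If your intent was to reprove the result from scratch, the gap is the entire quantitative core: you would need to control, for each of the three families of subgroups meeting $N$ properly, the number of such subgroups containing $l$ and their indices, and show the resulting alternating sum (or a refined inclusion--exclusion) stays below $37/90\cdot|N|^d$; none of this is done. If instead you are content to quote the literature, the efficient route is the one the paper takes: the statement is verbatim a special case of the cited Detomi--Lucchini theorem, and no further argument is required.
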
\begin{proof}	The statement is a particular case of \cite[Theorem 2.2]{dl}.\end{proof}

\begin{lemma}\label{sempreuno}If $t\geq \max\{3,d(L)\},$ then condition (1) in Lemma \ref{cyct} is satisfied.\end{lemma}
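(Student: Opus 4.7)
The plan is to argue by contradiction. Suppose $\pi_1\wedge\cdots\wedge\pi_t\neq\pi_{\text{min}}$; then there is a proper nonempty subset $B\subsetneq\{1,\dots,\delta\}$ which is a union of blocks of \emph{every} $\pi_i$, and I set $B'=\{1,\dots,\delta\}\setminus B$. For each $i$, I define $T_i\subseteq a_iN$ to be the union of those $X$-orbits that meet $\{a_ic_{i,j}\mid j\in B\}$, and $T_i'$ analogously using $B'$. Both are $X$-invariant by construction, and the block hypothesis on $B$ forces $T_i\cap T_i'=\emptyset$ (otherwise two indices $j\in B$ and $j'\in B'$ would be $\pi_i$-equivalent).

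The key structural step is a ``block-diagonal'' description of $\Omega$. By Lemma~\ref{primo}, the $\delta$ columns of the matrix form a complete set of $X$-orbit representatives on $\Omega$; and the $X$-orbit of the $j$-th column is contained in $T_1^{\eta_j}\times\cdots\times T_t^{\eta_j}$ (with $T^0:=T$, $T^1:=T'$), where $\eta_j=0$ if $j\in B$ and $\eta_j=1$ otherwise, simply because each entry of that orbit lies in the $X$-orbit of $a_ic_{i,j}$. Hence
\[ \Omega\subseteq(T_1\times\cdots\times T_t)\sqcup(T_1'\times\cdots\times T_t'). \]
In particular $(a_1,\dots,a_t)\in\Omega$ sits in one of these two ``pure'' products; after swapping $B$ and $B'$ if needed, I may assume $a_i\in T_i$ for every $i$.

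Now I would feed Lemma~\ref{delu} into this picture. Since $t\geq 3$ gives $t-1\geq 2$, and $L=\gen{a_1,\dots,a_t}$ gives $d_{a_i}(L)\leq t-1$, the lemma applied with $l=a_i$ and the remaining $a_j$'s yields at least $\tfrac{53}{90}|N|^{t-1}$ tuples $(n_j)_{j\neq i}\in N^{t-1}$ with $L=\gen{a_i,(a_jn_j)_{j\neq i}}$. Each such tuple gives an element of $\Omega$ whose $i$-th coordinate is $a_i\in T_i$, so by the block-diagonal containment it must sit in $T_1\times\cdots\times T_t$, i.e.\ $a_jn_j\in T_j$ for all $j\neq i$. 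Therefore $\prod_{j\neq i}|T_j|\geq\tfrac{53}{90}|N|^{t-1}$. Picking the column of any $j_0\in B'$, which provides a generating tuple in $\Omega\cap(T_1'\times\cdots\times T_t')$, the same argument applied there supplies $\prod_{j\neq i}|T_j'|\geq\tfrac{53}{90}|N|^{t-1}$.

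To close, set $\tau_j=|T_j|/|N|$ and $\tau_j'=|T_j'|/|N|$. From $T_j\sqcup T_j'\subseteq a_jN$ I get $\tau_j+\tau_j'\leq 1$, and hence $\tau_j\tau_j'\leq\tfrac14$ by AM--GM. Multiplying the two bounds above for a single $i$ gives
\[ \Big(\tfrac{53}{90}\Big)^2\leq\prod_{j\neq i}\tau_j\tau_j'\leq\Big(\tfrac14\Big)^{t-1}, \]
which is false already for $t=2$, and certainly for $t\geq 3$, producing the desired contradiction. The step I expect to be most delicate is the block-diagonal containment of $\Omega$: once that is set up cleanly, Lemma~\ref{delu} together with a one-line AM--GM bound finishes the proof.
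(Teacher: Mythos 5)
Your proof is correct and rests on exactly the same two ingredients as the paper's: the description in Lemma~\ref{primo} of $\Omega$ as the union of the $X$-orbits of the columns of the matrix, and the $53/90>1/2$ density bound of Lemma~\ref{delu} (which is where $t\ge 3$ enters, via $t-1\ge\max\{2,d_{a_i}(L)\}$). The paper runs the same pigeonhole directly rather than by contradiction: for $1\in\Omega_{1,1}$ and $j\in\Omega_{1,k}$ the sets $\Omega(a_1c_{1,1};a_2,\dots,a_t)$ and $\Omega(a_1c_{1,j};a_2,\dots,a_t)$ each have density at least $53/90>1/2$ in $N^{t-1}$ and hence intersect, and a common point yields via Lemma~\ref{primo} a part of $\pi_2$ meeting both $\Omega_{1,1}$ and $\Omega_{1,k}$ --- your observation that the $X$-invariant splitting $T_j\sqcup T_j'$ would have both pieces ``simultaneously too large'' is the contrapositive of this same intersection argument.
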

\begin{proof}	We may assume $1 \in \Omega_{1,1}$. % and $n_{1,1}=1.$
	If $\pi_1=\pi_{\text{min}},$ then clearly
	$\pi_{\text{min}}=\pi_1\wedge \dots \wedge \pi_t$. So we may assume $\pi_1=\{\Omega_{1,1},\dots,\Omega_{1,r_1}\}$ with $r_1>1.$ To prove our statement it suffices to show that for any  $2\leq k\leq r_1,$ there exists $1\leq  u\leq r_2$ such that $\Omega_{2,u} \cap \Omega_{1,1}$ and $\Omega_{2,u} \cap \Omega_{1,k}$ are both non-empty.
	Let $j \in\Omega_{1,k}$ with $k > 1.$ It follows from Lemma \ref{delu} that $\Omega(a_1c_{1,1};a_2,\dots,a_t)\cap \Omega(a_1c_{1,j};a_2,\dots,a_t)\neq \emptyset,$ so there exist $\nu_2,\dots,\nu_t\in N$ such that $\langle a_1c_{1,1}, a_2\nu_2,\dots,a_t\nu_t \rangle= \langle a_1c_{1,j}, a_2\nu_2,\dots,a_t\nu_t \rangle=L_\delta$. It follows from Lemma \ref{primo} that there exists $1\leq u< v\leq \delta$ and $\gamma_u,\gamma_v \in X$ such that
	$$\begin{aligned}(a_1c_{1,u},a_2c_{2,u},\dots,a_tc_{t,u})&=(a_1c_{1,1}, a_2\nu_2,\dots,a_t\nu_t)^{\gamma_u},\\
		(a_1c_{1,v},a_2c_{2,v},\dots,a_tc_{t,v})&=(a_1c_{1,j}, a_2\nu_2,\dots,a_t\nu_t)^{\gamma_v}.
		\end{aligned}$$
But then $\Omega_{2,u}=\Omega_{2,v}$. Since $u\in \Omega_{1,1}$ and $v\in \Omega_{1,j},$ we are done. \end{proof}

\begin{lemma}\label{unico_rank}
	Let $t\geq 3$ and suppose $L=\langle b_1,\dots,b_t\rangle N.$ Then
	$d_{b_t}(L)\leq t-1.$
\end{lemma}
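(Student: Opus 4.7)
The plan is to find $t-1$ elements $c_1,\dots,c_{t-1}\in L$ with $\langle c_1,\dots,c_{t-1},b_t\rangle=L$. I will seek them in the form $c_i=b_in_i$ with $n_i\in N$, because then $\langle c_1,\dots,c_{t-1},b_t\rangle\,N=L$ holds automatically and the only real task is to enforce $N\leq \langle c_1,\dots,c_{t-1},b_t\rangle$.

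The starting ingredient is the classical bound $d(L)\leq\max\{2,d(L/N)\}$, which is valid for finite monolithic groups with non-abelian socle (this follows from Gasch\"utz-type arguments adapted to the non-abelian socle setting, see \cite{dv}). From the hypothesis $L=\langle b_1,\dots,b_t\rangle N$ we obtain $d(L/N)\leq t$ and hence $d(L)\leq t$. I would then split into two cases. If $d(L/N)\leq t-1$, the bound combined with $t\geq 3$ gives $d(L)\leq t-1$, and therefore $d_{b_t}(L)\leq d(L)\leq t-1$, concluding the proof at once. The remaining case is $d(L/N)=t$, in which $(\ov{b_1},\dots,\ov{b_t})$ is a minimum generating tuple of $L/N$ and no coset can be dropped.

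For this tight case I would prove a lifting-with-one-fixed-element statement: given the minimum generating $t$-tuple $(\ov{b_1},\dots,\ov{b_t})$ of $L/N$ and the prescribed lift $b_t$ of $\ov{b_t}$, there exist lifts $c_i\in\ov{b_i}\,N$ for $i<t$ such that $\langle c_1,\dots,c_{t-1},b_t\rangle=L$. Lemma \ref{modgg} cannot be invoked directly here, since its hypothesis $r\geq d_{b_t}(L)$ is exactly the inequality to be proved. The substitute is Theorem \ref{cln}: it lets me adjust any pair $(a,b)$ with $[a,b]\in N$ inside its pair of $N$-cosets so as to become a genuinely commuting pair. Since $t-1\geq 2$ gives me at least two freely modifiable cosets, I plan to apply Theorem \ref{cln} to pairs built from $b_t$ and the $b_i$'s whose $L/N$-classes already commute, producing non-trivial elements in the intersection $\langle c_1,\dots,c_{t-1},b_t\rangle\cap N$.

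The main obstacle I foresee is twofold: first, the several prescriptions coming from the iterated use of Theorem \ref{cln} must be assembled into a single consistent choice of the $n_i$'s; second, one must promote a non-trivial intersection with $N$ to the full containment $N\leq\langle c_1,\dots,c_{t-1},b_t\rangle$. This last passage is subtle because the intersection is only $\langle c_1,\dots,c_{t-1},b_t\rangle$-invariant, not a priori $N$-invariant, and it is here that the precise structure of $N$ as a direct product of non-abelian simple groups, together with the fact that the subgroup $\langle c_1,\dots,c_{t-1},b_t\rangle$ already projects onto $L/N$, must be exploited. The lower bound $t\geq 3$ is essential throughout, since for $t=2$ the conclusion $d_{b_2}(L)\leq 1$ is equivalent to a spread-one condition on $L$ and fails in general.
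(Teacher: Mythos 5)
Your Case 1 ($d(L/N)\leq t-1$) is correct: the Lucchini--Menegazzo bound $d(L)\leq\max\{2,d(L/N)\}$ for a monolithic group with non-abelian socle gives $d(L)\leq t-1$, hence $d_{b_t}(L)\leq d(L)\leq t-1$. But the entire content of the lemma lies in your Case 2, $d(L/N)=t$, and there you do not give a proof: you state the required ``lift with one prescribed entry'' claim and then list, as foreseen obstacles, precisely the two steps that would constitute its proof. Moreover, the tool you propose, Theorem \ref{cln}, points in the wrong direction for this purpose. First, there need be no index $i<t$ with $[\overline{b_i},\overline{b_t}]=1$ in $L/N$, so the pairs you intend to feed into Theorem \ref{cln} may simply not exist. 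Second, that theorem replaces a pair $(a,b)$ with $[a,b]\in N$ by a genuinely commuting pair, i.e.\ it makes the commutator \emph{trivial}; to manufacture a non-trivial element of $\langle c_1,\dots,c_{t-1},b_t\rangle\cap N$ you would want the opposite. Third, as you yourself observe, a non-trivial intersection with $N$ does not yield $N\leq\langle c_1,\dots,c_{t-1},b_t\rangle$: a proper supplement $H$ of $N$ in $L$ can meet $N$ in a non-trivial diagonal-type subgroup which is $H$-invariant but not $N$-invariant, and nothing in your sketch excludes this. So the essential case remains open in your argument.

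The paper's proof instead exploits the \emph{proof}, not merely the statement, of Theorem 1.1 of \cite{LM}. Writing $N\cong S^{k}$ and embedding $L$ in $\aut(S)\wr\sym(k)$, one first arranges (possibly replacing $b_1,b_2$ by other elements $\bar b_1,\bar b_2$ with $\langle \bar b_1,\bar b_2,b_3,\dots,b_t\rangle N=L$) that the image of $b_1$ in $\sym(k)$ has convenient cycle structure; the Lucchini--Menegazzo argument then produces $n_1,n_2\in N$ with $L=\langle b_1n_1,b_2n_2,b_3,\dots,b_t\rangle$, i.e.\ it achieves generation by perturbing only \emph{two} of the $t$ entries. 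Since $t\geq3$ allows both perturbed entries to be chosen among $b_1,\dots,b_{t-1}$, the element $b_t$ is left untouched and $d_{b_t}(L)\leq t-1$ follows. This ``modify only two coordinates'' refinement is exactly the ingredient missing from your sketch, and it is not recoverable from Theorem \ref{cln} or from Lemma \ref{modgg}.
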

\begin{proof}
	We may identify $L$  with a subgroup of $\aut(S^\delta)=\aut S \wr \sym (\delta)$. 
	Let $\pi: \aut S \wr \sym (\delta) \to \sym (\delta)$ be the homomorphism which maps
	$(\alpha_1, \dots, \alpha_\delta)\sigma$ to $\sigma$. Suppose $L=\langle b_1,\dots,b_t\rangle.$ We may choose $b_1$ and $b_2$ in  such a way that
	$\pi(b_1)$ is not a cycle of length $\delta$; moreover if
	$\pi(b_1)$ has no fixed points but there exist
	$\bar b_1,  \bar b_2 \in L$ such that $L=\langle \bar b_1,\bar b_2,b_3,  \dots,
	b_t\rangle$ and $\pi(\bar l_1)$ has a fixed point, then we substitute
	$b_1,b_2$  by  $\bar b_1, \bar b_2.$
	With this choice of $b_1,\dots,b_t,$
	the proof of Theorem 1.1 in \cite{LM} shows that  there exist
	 $n_1, n_2 \in N$ such that
	$L=\langle b_1n_1, b_2n_2, b_3\dots, b_t \rangle$.
\end{proof}
\begin{lemma}\label{condizione}
	If $t\geq 3,$ then we may replace condition (2) in Lemma \ref{cyct} with the following condition:
	
	(3) for any $\gamma\in X,$ there exist  $m, n\in N$
	such that $[a_tm,\gamma n]=1.$
\end{lemma}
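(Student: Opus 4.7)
The plan is to show that condition (3) implies condition (2). Fix $\gamma \in X$ and, invoking (3), pick $m, n' \in N$ with $[a_tm, \gamma n'] = 1$. The strategy is to realise $a_tm$ as the $t$-th coordinate of some element of $\Omega$, and then transport the commutator relation along the $X$-action using Lemma \ref{primo}.

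First I would extend $a_tm$ to a generating tuple of the form $(a_1n_1, \dots, a_{t-1}n_{t-1}, a_tm)$. Since $(a_tm)N = a_tN$ and $L = \langle a_1, \dots, a_t\rangle$, the hypothesis of Lemma \ref{unico_rank} is satisfied by $(a_1, \dots, a_{t-1}, a_tm)$; hence $d_{a_tm}(L) \leq t-1$. Because $a_t \in \langle a_tm, N\rangle$ we also have $\langle a_1, \dots, a_{t-1}, a_tm, N\rangle = L$, so $t - 1 \geq d_{a_tm}(L)$ lets me apply the Gasch\"utz-style Lemma \ref{modgg} (with subset $\{a_tm\}$ and normal subgroup $N$) to produce $n_1, \dots, n_{t-1} \in N$ with
\[
L = \langle a_1n_1, \dots, a_{t-1}n_{t-1}, a_tm\rangle,
\]
so that $(a_1n_1, \dots, a_{t-1}n_{t-1}, a_tm)$ lies in $\Omega$.

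Next I would invoke Lemma \ref{primo}: the columns $(a_1c_{1,k}, \dots, a_tc_{t,k})$, $1 \leq k \leq \delta$, form a complete set of representatives for the $X$-orbits on $\Omega$, so there exist $k$ and $\alpha \in X$ with
\[
(a_1c_{1,k}, \dots, a_tc_{t,k}) = (a_1n_1, \dots, a_{t-1}n_{t-1}, a_tm)^\alpha.
\]
In particular $a_tc_{t,k} = (a_tm)^\alpha$. Since $N$ is characteristic in $L$ we have $(n')^\alpha \in N$, and conjugating the identity $[a_tm, \gamma n'] = 1$ by $\alpha$ gives
\[
1 = [a_tm, \gamma n']^\alpha = [a_tc_{t,k}, \gamma^\alpha (n')^\alpha],
\]
which is condition (2) of Lemma \ref{cyct} with $i = t$, $j = k$, $\beta = \alpha$ and $n = (n')^\alpha$.

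The crux of the plan is the extension step: guaranteeing that an arbitrary $a_tm$ appears as the $t$-th entry of some tuple in $\Omega$. This is exactly where the hypothesis $t \geq 3$ enters, via Lemma \ref{unico_rank}, which forces $d_{a_tm}(L) \leq t-1$ and so leaves Lemma \ref{modgg} enough room to adjust the first $t-1$ coordinates. Once the extension is in place, Lemma \ref{primo} together with the characteristic normality of $N$ close the argument essentially automatically.
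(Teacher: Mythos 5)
Your proof is correct and follows essentially the same route as the paper: extend $a_tm$ to a generating $t$-tuple via Lemma \ref{unico_rank} and Lemma \ref{modgg}, locate it among the orbit representatives using Lemma \ref{primo}, and conjugate the commutator relation by the resulting element of $X$. The only cosmetic difference is that you write the conjugated centralising element as $\gamma^{\alpha}(n')^{\alpha}$ directly, which is the cleaner form of the paper's final step.
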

\begin{proof}
	Suppose that the new condition holds. We want to prove that for any $\gamma\in X,$ there exist  $i \in \{1,\dots,t\},$ 
	$j \in \{1,\dots,\delta\}$, $\beta\in X$ and $n\in N$
	such that $[a_ic_{ij},\gamma^\beta n]=1.$	Choose $m, n \in N$ such that $[a_tm,\gamma n]=1.$ Since $L=\langle a_1,\dots,a_t\rangle=\langle a_1,\dots,a_{t-1},a_tm\rangle N,$ by Lemma \ref{unico_rank} we have that $d_{a_tm}(L)\leq t-1,$ and it follows from Lemma \ref{modgg}  that
	there exist $n_1,\dots, n_{t-1} \in N$ such that
	$L=\langle a_1n_1,\dots a_{t-1}n_{t-1}, a_tm \rangle$. This implies that there exists $j \in \{1,\dots,\delta\}$ and $\beta\in X$ such that
	$a_tc_{tj}=(a_tm)^\beta.$ But then $[a_tm,\gamma n]^\beta=[a_tc_{tj},(\gamma n)^\beta]=1.$ 
	Since $(\gamma n)^\beta=\gamma \tilde n$ for some $\tilde n\in N,$ we conclude that $[a_tc_{tj},\gamma^\beta \tilde n]=1$ and then
	condition (2) in Lemma \ref{cyct} is satisfied.
\end{proof}

\begin{prop}\label{fatica}
	If $t\geq 3,$ then $\Delta_{a_1,\dots,a_t}(L_\delta)$ is weakly connected. 
\end{prop}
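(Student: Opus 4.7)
The plan is to assemble the three preparatory lemmas and then invoke Theorem \ref{cln} as the one substantive input. Since $L=\langle a_1,\dots,a_t\rangle$ forces $t\geq d(L)$, the hypothesis $t\geq 3$ gives $t\geq \max\{3,d(L)\}$, so Lemma \ref{sempreuno} supplies condition (1) of Lemma \ref{cyct}, namely $\pi_{\min}=\pi_1\wedge\cdots\wedge\pi_t$. By Lemma \ref{condizione}, condition (2) of Lemma \ref{cyct} may be replaced by condition (3): for every $\gamma\in X$ there exist $m,n\in N$ with $[a_tm,\gamma n]=1$. The entire remaining task is therefore to produce such a pair $(m,n)$ for each $\gamma\in X$.

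To verify condition (3), I would exploit the defining property of $X=C_{\aut(L)}(L/N)$: since $\gamma$ centralizes $L/N$, we have $[a_t,\gamma]\in N$ inside any common overgroup of $a_t$ and $\gamma$. A natural choice is $\tilde L:=L\langle\gamma\rangle\leq\aut(L)$, obtained by identifying $L$ with $\inn(L)$ (permissible because $L$ is monolithic with non-abelian socle, hence $Z(L)=1$). The group $\tilde L$ is itself monolithic with non-abelian socle $N$: $N$ remains minimal normal in $\tilde L$ because $\gamma$ preserves the characteristic subgroup $N$ and acts trivially on $L/N$, and any further minimal normal subgroup of $\tilde L$ would have to centralize $N$, contradicting $C_L(N)=1$.

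Theorem \ref{cln} applied to $\tilde L$ with $a=a_t$ and $b=\gamma$ then produces $n,m\in N$ such that $[a_tn,\gamma m]=1$, which after renaming is precisely condition (3). Combined with condition (1) from Lemma \ref{sempreuno}, Lemma \ref{cyct} (through the replacement of Lemma \ref{condizione}) delivers the weak connectivity of $\Delta_{a_1,\dots,a_t}(L_\delta)$. The only non-routine point in the entire argument is the appeal to Theorem \ref{cln}; everything else is bookkeeping. In fact Lemmas \ref{sempreuno} and \ref{condizione} appear to have been engineered precisely to concentrate the commutator hypothesis on a single coset $a_tN$ and a single element $\gamma\in X$, so that Theorem \ref{cln} supplies the missing centralising translates. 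I therefore do not anticipate any real obstacle beyond being meticulous in choosing the ambient monolithic group in which $a_t$ and $\gamma$ simultaneously live.
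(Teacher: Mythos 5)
Your proposal is correct and follows essentially the same route as the paper: reduce via Lemmas \ref{sempreuno} and \ref{condizione} to condition (3) of Lemma \ref{condizione}, then apply Theorem \ref{cln} to $a_t$ and $\gamma$ using $[a_t,\gamma]\in N$. The paper leaves the ambient monolithic group implicit, whereas you spell out that $L\langle\gamma\rangle\leq\aut(L)$ is monolithic with socle $N$ — a correct and slightly more careful justification of the appeal to Theorem \ref{cln}.
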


\begin{proof}
	Combining Lemmas \ref{cyct}, \ref{sempreuno} and \ref{condizione}, it suffices to prove that for any $\gamma\in X,$ there exist  $m, n\in N$
	such that $[a_tm,\gamma n]=1.$ Since, by definition, $[a_t,\gamma]\in N,$ the conclusion follows from Theorem \ref{cln}.
\end{proof}

\begin{cor}\label{semprelocallycon}
	If $t\geq 3,$ then $L_\eta$ is $t$-locally connected for any $\eta\leq \delta(L,t).$
\end{cor}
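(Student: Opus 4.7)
The plan is to reduce the corollary to the case $\eta = \delta := \delta(L,t)$ already handled by Proposition \ref{fatica}, using the natural projection $\pi\colon L_\delta \twoheadrightarrow L_\eta$ onto $\eta$ of the $\delta$ coordinates. Fix a generating tuple $a_1,\dots,a_t$ of $L$ with $t \geq 3$.

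First I would record two basic features of $\pi$. Since $\pi$ is a surjection that respects the crown structure, $\pi(a_i \circ m) = a_i \circ \pi(m)$ and $\pi((a_i \circ m)^y) = (a_i \circ \pi(m))^{\pi(y)}$ for every $m \in N^\delta$ and every socle element $y \in N^\delta$. Moreover, if $(a_u \circ m_u)_{u=1}^t$ generates $L_\delta$, then its component-wise $\pi$-image generates $L_\eta$, so $\pi$ carries edges of $\Gamma_{a_1,\dots,a_t}(L_\delta)$ to edges of $\Gamma_{a_1,\dots,a_t}(L_\eta)$ (or to pairs of coincident vertices, which is harmless).

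The crucial step is a lifting lemma: every non-isolated vertex $a_i \circ m$ of $\Delta_{a_1,\dots,a_t}(L_\eta)$ is the image under $\pi$ of a non-isolated vertex of $\Delta_{a_1,\dots,a_t}(L_\delta)$. Here I would invoke Lemma \ref{primo} twice. By the maximality built into the definition of $\delta$ and Lemma \ref{primo}, the action of $X = C_{\aut(L)}(L/N)$ on $\Omega$ has exactly $\delta$ orbits. A non-isolated vertex in $L_\eta$ comes with companions $m_u \in N^\eta$ whose associated $t \times \eta$ matrix has its $\eta$ columns in $\eta$ distinct $X$-orbits; picking representatives of the remaining $\delta - \eta$ orbits and appending them as new columns yields a $t \times \delta$ matrix whose columns form a complete set of orbit representatives. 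Lemma \ref{primo} then guarantees that the corresponding tuple generates $L_\delta$, providing the required lift $a_i \circ \widetilde m$ with $\pi(\widetilde m) = m$.

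To conclude, given two vertices $a_i \circ m_1$ and $a_i \circ m_2$ of $\Delta_{a_1,\dots,a_t}(L_\eta)$, I lift them to non-isolated vertices $a_i \circ \widetilde m_1$ and $a_i \circ \widetilde m_2$ of $\Delta_{a_1,\dots,a_t}(L_\delta)$, and apply Proposition \ref{fatica} to obtain $y \in N^\delta$ together with a path in $\Delta_{a_1,\dots,a_t}(L_\delta)$ from $a_i \circ \widetilde m_1$ to $(a_i \circ \widetilde m_2)^y$. Projecting this path under $\pi$ and using the two basic features above produces a walk in $\Delta_{a_1,\dots,a_t}(L_\eta)$ from $a_i \circ m_1$ to $(a_i \circ m_2)^{\pi(y)}$, so $m := \pi(y) \in N^\eta$ witnesses the weak connectivity required by the definition. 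I expect the lifting step to be the main technical hurdle, since it rests on the exact count of $X$-orbits on $\Omega$ — precisely $\delta$, not merely $\geq \delta$ — supplied by the maximality in the definition of $\delta(L,t)$; the remainder of the argument is an exercise in tracking how $\pi$ intertwines adjacency and socle-conjugation.
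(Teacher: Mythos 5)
Your proposal is correct and follows essentially the same route as the paper's proof: lift non-isolated vertices of $\Delta_{a_1,\dots,a_t}(L_\eta)$ along the coordinate projection $N^\delta\to N^\eta$ using Lemma \ref{primo}, apply Proposition \ref{fatica} at the level of $L_\delta$, and project the resulting path back down. Your explicit justification of the lifting step (appending representatives of the remaining $\delta-\eta$ orbits of $X$ on $\Omega$) is precisely the detail the paper leaves implicit.
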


\begin{proof}
We have a map $\xi: N^\delta\to N^\eta$ sending $(n_1,\dots,n_\delta)$ to $(n_1,\dots,n_\eta).$ 
Assume $L=\langle a_1,\dots,a_t\rangle$. It follows from
Lemma \ref{primo} that if $a_i\circ m$ is a non-isolated vertex of the graph $\Gamma_{a_1,a_2,\dots,a_t}(L_\eta),$ then there exists
$\tilde m \in N^\delta$ such that $m=\tilde m^\xi$ and $a_i\circ \tilde m$ is a non-isolated vertex of  $\Gamma_{a_1,a_2,\dots,a_t}(L_\delta).$ In particular if
$a_i\circ m_1$ and $a_i\circ m_2$ are non-isolated vertices of $\Gamma_{a_1,a_2,\dots,a_t}(L_\eta),$ then, by Proposition \ref{fatica}, in 
$\Gamma_{a_1,a_2,\dots,a_t}(L_\delta)$ there is a path joining 
$a_i\circ \tilde m_1$ and $(a_i\circ \tilde  m_2)^\nu$ for some $\nu\in N^\delta$. By applying $\xi,$ we obtain a path in $\Gamma_{a_1,a_2,\dots,a_t}(L_\eta)$ from $a_i\circ m_1$ to $(a_i\circ m_2)^{\nu^\xi}.$
\end{proof}

The following easy result plays a crucial role in the study of the connectivity properties of the graph $\Gamma_d(G).$

\begin{lemma}\label{frat}
	If $\Gamma_d(G/\frat(G))$ is connected, then $\Gamma_d(G)$ is connected.
\end{lemma}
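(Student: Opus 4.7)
Set $\bar G = G/\Phi(G)$ and let $\pi\colon G\to \bar G$ be the canonical projection. The underlying algebraic tool is the defining property of the Frattini subgroup: for any subset $S\subseteq G$, we have $\langle S\rangle=G$ if and only if $\langle S,\Phi(G)\rangle=G$, i.e.\ if and only if $\langle\pi(S)\rangle=\bar G$. I will use this in two complementary ways, to show that vertices of $\Gamma_d(G)$ in the same $\Phi(G)$-coset are close, and to lift paths from $\Gamma_d(\bar G)$ to $\Gamma_d(G)$.

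\emph{Step (A): a $\Phi(G)$-coset lies in a single component.} Suppose $x$ is a non-isolated vertex of $\Gamma_d(G)$, so $\{x,z,g_3,\ldots,g_d\}$ generates $G$ for some $z\neq x$ and some $g_3,\ldots,g_d$. For any $\phi\in\Phi(G)$, the set $\{x\phi,z,g_3,\ldots,g_d\}$ has the same image in $\bar G$ as $\{x,z,g_3,\ldots,g_d\}$, which generates $\bar G$; by the Frattini property it generates $G$. Hence, provided the entries are distinct (if not, one perturbs the $g_i$'s within their $\Phi(G)$-cosets, which does not affect the projection to $\bar G$), the vertex $x\phi$ is adjacent to $z$ in $\Gamma_d(G)$, so $x$ and $x\phi$ belong to the same connected component.

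\emph{Step (B): lifting.} Given an edge $\bar x\bar y$ of $\Gamma_d(\bar G)$ witnessed by a $d$-element generating set $\{\bar x,\bar y,\bar g_3,\ldots,\bar g_d\}$ of $\bar G$, and given arbitrary preimages $x,y$ of $\bar x,\bar y$ and arbitrary lifts $g_i$ of $\bar g_i$, the set $\{x,y,g_3,\ldots,g_d\}$ projects onto a generating set of $\bar G$ and so, by the Frattini property, generates $G$. Thus $x$ and $y$ are adjacent in $\Gamma_d(G)$.

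Now fix two non-isolated vertices $x,y$ of $\Gamma_d(G)$. Any neighbour of $x$ in $\Gamma_d(G)$ either projects to a distinct vertex of $\bar G$, producing an edge at $\pi(x)$ in $\Gamma_d(\bar G)$, or lies in $x\Phi(G)$; in the latter case a short argument (using that $\bar G$ is then $(d{-}1)$-generated with $\pi(x)$ in the generating tuple, which we complete to a genuine $d$-element generating set) shows $\pi(x)$ is still non-isolated in $\Gamma_d(\bar G)$. So $\pi(x),\pi(y)$ are non-isolated vertices of $\Gamma_d(\bar G)$ and by hypothesis are joined by a path $\pi(x)=\bar x_0-\bar x_1-\cdots-\bar x_k=\pi(y)$. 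Choose lifts $\tilde x_i\in G$ of $\bar x_i$ with $\tilde x_0=x$ and $\tilde x_k=y$ and, for each edge of the path, apply Step (B) to obtain $\tilde x_i$ and $\tilde x_{i+1}$ adjacent in $\Gamma_d(G)$. This produces a path from $x$ to $y$ in $\Gamma_d(G)$, proving connectivity.

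The main obstacle I expect to encounter is bookkeeping rather than substance: one must ensure that the $d$-element subsets used as witnesses to edges always have $d$ \emph{distinct} entries, both in Step (A) when perturbing $x$ to $x\phi$ and in Step (B) when choosing lifts. This is handled by exploiting the freedom to replace each $g_i$ (or $\tilde x_i$) within its own $\Phi(G)$-coset, which by Step (A) does not alter generation; the only slightly delicate point is verifying that non-isolated vertices $x$ of $\Gamma_d(G)$ whose neighbourhood is contained in $x\Phi(G)$ still correspond to non-isolated vertices in $\Gamma_d(\bar G)$, which requires producing a genuine partner in $\bar G$ out of a $(d{-}1)$-element generating set.
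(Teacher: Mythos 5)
Your proof is correct and rests on exactly the same observation as the paper's (which is a one-line argument): a subset generates $G$ if and only if its image generates $G/\frat(G)$, so edges of $\Gamma_d(G/\frat(G))$ lift to edges of $\Gamma_d(G)$ between arbitrary preimages and each Frattini coset collapses into a single component. Your version merely spells out the path-lifting and the (harmless) bookkeeping about distinctness, which can always be settled by padding a generating set with arbitrary extra elements.
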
 

\begin{proof}
	It follows immediately from the fact that $G=
	\langle x_1,\dots,x_d\rangle$ if and only if
		$G/\frat(G)=
	\langle x_1\frat(G),\dots,x_d\frat(G)\rangle.$
\end{proof}

\begin{lemma}\label{induzionenormale} Let $M$ be a normal subgroup of a finite group $G$ and assume that $x$ and $y$ are two non-isolated vertices of $\Gamma_d(G).$
	If $xM$ and $yM$ are in the same connected component of $\Gamma_d(G/M),$ then there  exists $m\in M$  such that $x$ and $ym$ are in the same connected component of $\Gamma_d(G).$
\end{lemma}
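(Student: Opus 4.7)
The plan is to lift a connecting path in the quotient graph $\Gamma_d(G/M)$ step by step to a walk in $\Gamma_d(G)$ starting at $x$, using the Gasch\"utz-type lifting result Lemma \ref{modgg} at every step. The terminal vertex of the lifted walk will land somewhere in the coset $yM$ rather than at $y$ itself, which is precisely the flexibility the statement allows through the element $m$.

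In more detail, I would fix a path $xM=x_1M-x_2M-\cdots-x_rM=yM$ in $\Gamma_d(G/M)$, set $x'_1:=x$, and construct inductively $x'_{i+1}\in x_{i+1}M$ adjacent to $x'_i$ in $\Gamma_d(G)$. The edge $x_iM-x_{i+1}M$ in the quotient furnishes elements $w_1,\ldots,w_{d-2}\in G$ with $\langle x'_i,x_{i+1},w_1,\ldots,w_{d-2},M\rangle=G$, where I have used that $x'_iM=x_iM$. Since $x'_i$ is a non-isolated vertex of $\Gamma_d(G)$ we have $d_{x'_i}(G)\leq d-1$, so applying Lemma \ref{modgg} with $X=\{x'_i\}$ will produce elements of $M$ correcting $x_{i+1},w_1,\ldots,w_{d-2}$ so that together with $x'_i$ they generate $G$. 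Renaming the correction of $x_{i+1}$ as $x'_{i+1}$ produces the required edge in $\Gamma_d(G)$ and, by the very existence of that edge, maintains the non-isolation needed to iterate.

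After $r-1$ iterations one reaches $x'_r\in x_rM=yM$ in the same connected component as $x$, and the element $m:=y^{-1}x'_r\in M$ is the one promised by the statement. There is no real obstacle in this argument: the only delicate point is the inductive check that each $x'_i$ is non-isolated in $\Gamma_d(G)$ before invoking Lemma \ref{modgg} (which requires the bound $r\geq d_X(G)$ on its hypothesis), and this holds automatically because the previous step has just exhibited an edge of $\Gamma_d(G)$ incident to $x'_i$.
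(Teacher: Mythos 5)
Your proof is correct and follows essentially the same route as the paper's: both lift the path in $\Gamma_d(G/M)$ edge by edge, applying Lemma \ref{modgg} with $X$ equal to the singleton of the current lifted vertex and using the just-constructed edge (or, at the base case, the non-isolation of $x$) to guarantee $d_X(G)\leq d-1$. The only difference is cosmetic: you make explicit the inductive check of non-isolation that the paper leaves implicit.
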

\begin{proof}Suppose that $g_1M - g_2M - \dots - g_tM$ is a path in $\Gamma_d(G/M)$ with $g_1=x$ and $g_t=y.$ To conclude the proof of the claim, it suffices to prove, by induction on $j,$ that for each $1 \leq j \leq t,$ there exists $m_j\in M$ such that $\Gamma_d(G)$ contains a path joining $x$ and
	$g_jm_j.$ Assume that the statement is true for $j<t.$ Since $g_jM$ and $g_{j+1}M$ are adjacent in $\Gamma_d(G/M)$, there exist $h_3,\dots,h_d$ such that $\langle g_jm_j, g_{j+1}, h_3,\dots,h_d\rangle M=G.$ 
	By Lemma \ref{modgg},  $\langle g_jm_j, g_{j+1}z_2, h_3z_3,\dots,h_dz_d\rangle=G,$ for suitable $z_2,\dots,z_d\in M.$ Taking $m_{j+1}=z_2,$ we can conclude that the statement is true for $j+1.$
\end{proof}

\begin{lemma}\label{norsol}
	If $N$ is a soluble normal subgroup of $G$ and $\Delta_d(G/N)$ is connected, then $\Delta_d(G)$ is also connected.
\end{lemma}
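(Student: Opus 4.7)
The plan is to argue by induction on $|N|$. The base case $N=1$ is trivial. For the inductive step, pick a minimal normal subgroup $M$ of $G$ contained in $N$; by the solubility of $N$, $M$ is an elementary abelian $p$-group. Since $N/M$ is a soluble normal subgroup of $G/M$ of order strictly smaller than $|N|$, and since $\Delta_d\bigl((G/M)/(N/M)\bigr)\cong\Delta_d(G/N)$ is connected by hypothesis, the induction hypothesis applied to $G/M$ yields that $\Delta_d(G/M)$ is connected. It therefore suffices to prove the lemma in the case where $N=M$ is an elementary abelian minimal normal subgroup of $G$.

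Given two non-isolated vertices $x,y$ of $\Gamma_d(G)$, their images $\bar x,\bar y$ in $\Gamma_d(G/M)$ are non-isolated and lie in the unique non-trivial connected component of $\Delta_d(G/M)$. Lemma \ref{induzionenormale} then provides $m\in M$ such that $x$ and $ym$ lie in the same connected component of $\Gamma_d(G)$. The problem consequently reduces to the following \emph{same-coset} statement: for every non-isolated vertex $y$ of $\Gamma_d(G)$ and every $m\in M$, the elements $y$ and $ym$ lie in the same connected component of $\Gamma_d(G)$.

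If $M\leq\frat(G)$, this is immediate: generation of $G$ by a $d$-tuple of elements depends only on the image of the tuple in $G/\frat(G)$, so modifying a coordinate of a generating $d$-tuple by an element of $M$ still yields a generating $d$-tuple; hence $y$ and $ym$ have identical neighbourhoods in $\Gamma_d(G)$ and share, for example, the common neighbour $h_2$ coming from any generating $d$-tuple $(y,h_2,\ldots,h_d)$. If instead $M\not\leq\frat(G)$, then by the minimality of $M$ we have $M\cap\frat(G)=1$, and Gasch\"utz's complement theorem provides a complement $H$ of $M$ in $G$, so that $G=M\rtimes H$. Fix a generating $d$-tuple $(y,h_2,\ldots,h_d)$ of $G$ and apply Lemma \ref{modgg} with $X=\{ym\}$ and the lifted elements $h_2,\ldots,h_d$: since $ym$ is itself non-isolated in $\Gamma_d(G)$ (by Gasch\"utz lifting, $d_{ym}(G)\leq d-1$), the lemma produces $m_2,\ldots,m_d\in M$ with $\langle ym,h_2 m_2,\ldots,h_d m_d\rangle=G$, so $ym$ is adjacent in $\Gamma_d(G)$ to $h_2 m_2$, while $y$ is adjacent to $h_2$. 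The task reduces to showing that $h_2$ and $h_2 m_2$ lie in the same connected component of $\Gamma_d(G)$.

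This is the main obstacle: it is an instance of the same-coset statement for the element $h_2$ in place of $y$, so a naive iteration would be circular. To break the loop, I plan to study the set $S(y)=\{n\in M:yn\in C(y)\}$, where $C(y)$ denotes the connected component of $y$ in $\Gamma_d(G)$, and to prove that it is a $G$-invariant subgroup of $M$. Non-triviality of $S(y)$ follows from Lemma \ref{induzionenormale} applied along a non-trivial closed walk in $\Delta_d(G/M)$ based at $\bar y$, using the freedom in the choices of lifts permitted by Lemma \ref{modgg}. The subgroup property and the $G$-invariance are obtained by combining the graph-automorphism action of inner conjugations (which permute the connected components of $\Gamma_d(G)$) with the concatenation behaviour of the paths produced by Lemma \ref{induzionenormale}. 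By the minimality of $M$ one then deduces $S(y)=M$, yielding the same-coset connectivity and completing the proof.
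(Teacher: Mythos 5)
Your reduction to the case of a complemented abelian minimal normal subgroup $M$, and from there to a ``same-coset'' statement via Lemma \ref{induzionenormale}, is sound and coincides with the paper's own setup (the paper inducts on $|G|$ rather than $|N|$, disposes of the Frattini case via Lemma \ref{frat}, and uses Lemma \ref{induzionenormale} exactly as you do). One small but real correction: the same-coset statement must be restricted to those $m$ for which $ym$ is itself a non-isolated vertex, otherwise it is false (e.g.\ $G=S_3$, $M=A_3$, $y$ a $3$-cycle, $ym=1$); your parenthetical claim that $d_{ym}(G)\leq d-1$ ``by Gasch\"utz lifting'' presupposes precisely this and is circular as written.

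The genuine gap is your final paragraph, which is where the entire difficulty of the lemma lives: none of the three properties you need for $S(y)=\{n\in M: yn\in C(y)\}$ is established, and the mechanisms you sketch do not establish them. (i) Non-triviality: Lemma \ref{induzionenormale} applied along a closed walk at $\bar y$ only produces \emph{some} $m$ with $y\sim ym$, and nothing rules out $m=1$; the ``freedom of lifts'' in Lemma \ref{modgg} is purely existential. Proving $S(y)\neq 1$ when, say, $M$ is central is essentially the whole problem. (ii) Subgroup property: concatenating a path from $y$ to $yn_1$ with one from $y$ to $yn_2$ requires translating the latter by $n_1$, but right multiplication by $n_1$ is not an automorphism of $\Gamma_d(G)$. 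Conjugation is, and Lemma \ref{coniugo} makes $C(y)$ conjugation-invariant, but $(yn)^g=y[y,g]n^g$, so this only yields $[y,M]\subseteq S(y)$ and closure of $S(y)$ under multiplication by such commutators --- not a subgroup, and vacuous when $y$ centralises $M$. (iii) $G$-invariance fails for the same reason: $(yn)^g$ lies in $yM$ only when $[y,g]\in M$, and even then one gets $[y,g]n^g\in S(y)$, not $n^g\in S(y)$. The paper closes this gap with an entirely different and genuinely nontrivial argument: if two vertices $n_1h,n_2h$ of the coset $Nh$ lay in different components, the sets $\Omega_{n_ih}$ of non-generating lifts of $(h_2,\dots,h_d)$ would cover $N^{d-1}$; since $|\Omega_{nh}|$ equals the number of complements of $N$ containing $nh$ and is therefore a power of $f=|\End_H(N)|$, this forces $f=2$ and $|\Omega_{nh}|=|N|^{d-1}/2$, after which an explicit construction of $|N|^{d-1}/2+1$ complements through a common element contradicts that count being a power of $2$ smaller than $|N|^{d-1}$. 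An argument of comparable substance is missing from your proposal.
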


\begin{proof}
	We prove our statement by induction on the order of $G.$ We may assume that $N$ is a minimal normal subgroup of $G$ and that it is not contained in the Frattini subgroup of $G.$ Thus there exists a complement $H$ of $N$ in $G$ and $\Delta_d(H)\cong \Delta_d(G/N)$ is connected.

	It follows from Lemma \ref{induzionenormale} that in order to prove that $\Delta_d(G)$ is connected, it suffices to prove that for any vertex $h$ of $\Delta_d(H)$, all the vertices of $\Delta_d(G)$ contained in the coset $Nh$ belong to the same connected component of $\Delta_d(G).$

	So suppose by contradiction that the previous statement fails for a given non-isolated vertex $h$ of $\Gamma_d(H).$ Let $F:=\End_H(N)$, $f:=|F|$ and $\dim_F N=u$. Assume $H=\langle h, h_2,\dots,h_d\rangle$ and, for any $n\in N,$ let
	$$\Omega_{nh}=\{(n_2,...,n_d)\in N^{d-1}\mid \langle nh, n_2h_2\dots,n_dh_d \rangle \neq G\}.$$
	There must exist $n_1, n_2 \in N$ such that $n_1h$ and $n_2h$ belong to different components of $\Gamma_d(G).$ This implies $\Omega_{n_1h}\cup \Omega_{n_2h}=N^{d-1}$ and therefore there exists $n\in \{n_1,n_2\}$ with $|\Omega_{nh}|\geq |N|^{d-1}/2.$
	Notice that $|\Omega_{nh}|$ coincides with the number of complements
	 of $N$ in $G$ containing $nh$ and therefore $|\Omega_{nh}|=f^k,$ where $k$ is the dimension over $F$ of the group ${\rm{Der}}(K,N)$ of derivations from $K$ to $N$ which fix $nh,$ being $K$ one of these complements. Since $\Omega_{nh}\neq N^{d-1},$ we have
	$$	\frac{f^{u(d-1)}}{2}\leq f^k < f^{u(d-1)},
	$$
	which is possible only if $f=2$ and $k=u(d-1)-1$. This implies also $|\Omega_{n_1h}|=|\Omega_{n_2h}|=f^k.$ 
	Notice that $f=2$ implies that $N$ has exponent 2.
	Let $c:=f^k$. Assume that
	$v_1,\dots,v_c$ are the elements of $\Omega_{n_1h}$ and
	$w_1,\dots,w_c$ the elements of $\Omega_{n_2h}$.
	%Exchanging $n_1$ and $n_2$ if needed, we may assume
%	$w_j \neq (1,\dots,1)$ for every $j.$
	For $1\leq i \leq c,$ we use the notation
	$$v_i=(v_{2i},\dots,v_{di}), w_i=(w_{2i},\dots,w_{di}),$$
	with $v_{ji}, w_{ji} \in N.$ Notice that, for 
	$1\leq i\leq c,$ we must have
	$$\langle n_1h, w_{2j}h_2,\dots,w_{dj}h_d\rangle
	=\langle n_2h, v_{2j}h_2,\dots,v_{dj}h_d\rangle=G.$$
	In particular $v_{21}h_2$ is a non-isolated vertex of $\Gamma_d(G)$
	and belongs to the connected component of $\Gamma_d(G)$ containing $n_2h.$ Moreover $n_1h$ and $(n_1h)(w_{2j}h_2)$ are adjacent vertices of $\Delta_d(G),$ and therefore  $v_{21}h_2$ and
	$n_1hw_{2j}h_2$ belong to different components of $\Delta_d(G).$
	This implies that for, any $1\leq j\leq c,$
	$$\langle n_1hw_{2j}v_{21}, v_{21}h_2,w_{3j}v_{31}h_3,\dots, w_{dj}v_{d1}h_d\rangle$$ is a complement of $N$ in $G.$
	Since the $c$ elements $(w_{2j}v_{21},\dots,w_{dj}v_{d1})$ of
	$N^{d-1}$ are all distinct, we have constructed $c$ different complements of $N$ in $G$ which contain $v_{21}h_2.$ Moreover
	$w_{2j}v_{21}\neq 1,$ otherwise $n_1h$ and $n_2h$ would be adjacent to $v_{21}h_2=w_{2j}h_2$, and therefore
	$\langle n_1h, v_{21}h_2,v_{31}h_d\dots, v_{d1}h_d\rangle$
	is another complement containing $v_{21}h_2$ which is different from the previous ones. We have so proved that there are at least $c+1=f^k+1$ complements of $N$ in $G$ containing $v_{21}h_2.$ Since the number of these complements is a power of $f$, we deduce that $\langle z_1h, v_{21}h_2,z_3h_3,\dots,z_dh_d\rangle$  is a complement of $N$ in $G$ for every $(z_1, z_3, \dots, z_d) \in N^{d-1},$ in contradiction with the fact that $\langle n_2h, v_{21}h_2,\dots,v_{d1}h_d\rangle=G.$
\end{proof}

To conclude our proof, we need to recall some properties of the crowns of finite groups. The notion of crown was
introduced by Gasch\"{u}tz  in \cite{Gaschutz} in the case of finite soluble groups and generalised
in \cite{JL} to arbitrary finite groups.
A detailed exposition of the theory is also given in \cite[Section 1.3]{cl}. If a group $G$ acts on a group $A$ via automorphisms (that is, if there exists a homomorphism $G\rightarrow \aut(A)$), then we say that $A$ is a $G$-group. If $G$ does not stabilise any nontrivial proper subgroup of $A$, then $A$ is called an irreducible $G$-group. Two $G$-groups $A$ and $B$ are said to be $G$-isomorphic, or $A\cong_G B$, if there exists a group isomorphism $\phi: A\rightarrow B$ such that 
$\phi(g(a))=g(\phi(a))$ for all $a\in A, g\in G$.  Following  \cite{JL}, we say that two  $G$-groups $A$ and $B$  are $G$-equivalent and we put $A \equiv_G B$, if there are isomorphisms $\phi: A\rightarrow B$ and $\Phi: A\rtimes G \rightarrow B \rtimes G$ such that the following diagram commutes:

\begin{equation*}
	\begin{CD}
		1@>>>A@>>>A\rtimes G@>>>G@>>>1\\
		@. @VV{\phi}V @VV{\Phi}V @|\\
		1@>>>B@>>>B\rtimes G@>>>G@>>>1.
	\end{CD}
\end{equation*}

\

Note that two  $G$\nobreakdash-isomorphic $G$\nobreakdash-groups are $G$\nobreakdash-equivalent. In the particular case where $A$ and $B$ are abelian the converse is true: if $A$ and $B$ are abelian and $G$\nobreakdash-equivalent, then $A$ and $B$ are also $G$\nobreakdash-isomorphic.
It is proved (see for example \cite[Proposition 1.4]{JL}) that two  chief factors $A$ and $B$ of $G$ are $G$-equivalent if and only if either they are  $G$-isomorphic, or there exists a maximal subgroup $M$ of $G$ such that $G/\core_G(M)$ has two minimal normal subgroups $X$ and $Y$ that are
$G$-isomorphic to $A$ and $B$ respectively. For example, the minimal normal subgroups of a crown-based power $L_k$ are all $L_k$-equivalent.

For an irreducible $G$-group $A$, we define $\delta_G(A)$ to be the number of non-Frattini
chief factors $G$-equivalent to $A$ in a chief series for $G$ and we denote by $L_A$ the monolithic primitive group associated to $A$.
That is
$$L_{A}=
\begin{cases}
	A\rtimes (G/C_G(A)) & \text{ if $A$ is abelian}, \\
	G/C_G(A)& \text{ otherwise}.
\end{cases}
$$
If $A$ is a non-Frattini chief factor of $G$, then $L_A$ is a homomorphic image of $G$. More precisely, there exists a normal subgroup $N$ of $G$ such that $G/N \cong L_A$ and $\soc(G/N)\equiv_G A$. Consider now all the normal subgroups $N$ of $G$ with the property that $G/N \cong L_A$ and $\soc(G/N)\equiv_G A$:
the intersection $R_G(A)$ of all these subgroups has the property that  $G/R_G(A)$ is isomorphic to the crown-based power $(L_A)_{\delta_G(A)}$. The socle $I_G(A)/R_G(A)$ of $G/R_G(A)$ is called the {$A$-crown} of $G$ and it is  a direct product of $\delta_G(A)$ minimal normal subgroups $G$-equivalent to $A$.

\begin{lemma}\label{coniugo}
	Let $G$ be a finite group and $g$ a vertex of $\Delta_d(G).$ 
	Then the connected component of $\Delta_d(G)$ containing $g$, contains also $g^z$ for every $z \in G.$
\end{lemma}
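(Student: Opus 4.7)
The plan is to show that the connected component $C$ of $\Delta_d(G)$ containing $g$ is invariant under conjugation by every element of $G$. The key will be to prove that the set $T(g)$ of vertices adjacent to $g$ in $\Delta_d(G)$ already generates $G$ as an abstract subgroup, and that conjugation by every $t \in T(g)$ preserves $C$ setwise.

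The first step is to verify that $\gen{T(g)} = G$. Since $g$ is a non-isolated vertex, fix a generating set $\{g, h_2, \dots, h_d\}$ of $G$ of cardinality $d$; each $h_i$ is then adjacent to $g$ via this very set, so $h_2, \dots, h_d \in T(g)$. If $\gen{h_2, \dots, h_d} = G$ we are immediately done; otherwise $g \notin \gen{h_2, \dots, h_d}$, and after possibly rearranging so that $h_2 \neq 1$, the set $\{g, gh_2, h_3, \dots, h_d\}$ still consists of $d$ distinct elements (the distinctness uses $g \notin \gen{h_2, \dots, h_d}$) and still generates $G$ because $\gen{g, gh_2} = \gen{g, h_2}$. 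This exhibits $gh_2 \in T(g)$, and so $g = (gh_2) h_2^{-1} \in \gen{T(g)}$, whence $\gen{T(g)} = G$.

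The second step is to show that conjugation by every $t \in T(g)$ preserves $C$. Conjugation is a bijection of $G$ carrying generating sets of cardinality $d$ to generating sets of cardinality $d$, so it acts on $\Delta_d(G)$ by graph automorphisms. Fix a generating set $\{g, t, z_3, \dots, z_d\}$ of cardinality $d$ realising the edge between $g$ and $t$; conjugating it by $t$ produces $\{g^t, t, z_3^t, \dots, z_d^t\}$, which is again a generating set of cardinality $d$ containing both $g^t$ and $t$. This exhibits an edge between $t$ and $g^t$, so $g - t - g^t$ is a path in $\Delta_d(G)$ and $g^t \in C$. Since conjugation by $t$ is a graph automorphism of $\Delta_d(G)$ that carries some element of $C$ (namely $g$) into $C$, it must permute $C$ bijectively onto itself.

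Finally, every $z \in G$ can be written as $z = t_1^{\varepsilon_1} \cdots t_k^{\varepsilon_k}$ with $t_i \in T(g)$ and $\varepsilon_i \in \{\pm 1\}$, because $\gen{T(g)} = G$. The composition of the corresponding inner automorphisms still preserves $C$ setwise, so $g^z \in C$, as desired. The only point requiring some care is the distinctness of the modified generating set in the first step; once that is handled, the rest of the argument is a clean consequence of the fact that $G$ acts on $\Delta_d(G)$ via inner automorphisms by graph automorphisms.
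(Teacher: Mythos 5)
Your proof is correct and follows essentially the same route as the paper's: conjugation acts on $\Delta_d(G)$ by graph automorphisms, a neighbour $t$ of $g$ conjugates $g$ within its component via the path $g - t - g^t$, and the set of elements conjugating $g$ into its own component is closed under products and inverses, hence is a subgroup that must be all of $G$. The only difference is that your first step is avoidable: since $g^g=g$ trivially, the subgroup $\{a\in G \mid g^a \text{ lies in the component of } g\}$ already contains the entire generating set $\{g,h_2,\dots,h_d\}$, so the $gh_2$ trick used to show $g\in\langle T(g)\rangle$ is unnecessary (though correct).
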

\begin{proof}
	We use the symbol $x \sim y$ to denote that either $x=y$ or there is a path in $\Delta$ joining $x$ and $y.$ Notice that if $x\sim x^a$ and $x\sim x^b$, then by conjugating by $b$ the first relation we get $x^b\sim x^{ab}$, so $x\sim x^b\sim x^{ab}$, and therefore $x\sim x^{ab}.$ Moreover if $x\sim x^a$, then by conjugating by $a^{-1}$, we get $x\sim x^{a^{-1}}$. This means that $G_g:=\{a\in G \mid g\sim g^a\}$	is a subgroup of $G.$ Since $g$ is a vertex of $\Delta_d(G)$, there exist $g_2,\dots,g_d$ such that  $G=\langle g,g_2,\dots,g_d\rangle=\langle g^{g_i},g_2^{g_i},\dots,g_d^{g_i}\rangle.$ Since $g_i^{g_i}=g_i$, we have that $g_i$ is adjacent to both $g$ and $g^{g_i},$
	so $g \sim g^{g_i}.$ But then $G=\langle g_1,\dots,g_d\rangle \leq G_g.$ 
\end{proof}

\begin{lemma}\label{riduco}
	Suppose that every non-abelian crown-based-power appearing as epimorphic image of a finite group $G$ is $d$-locally connected. Then $\Delta_d(G)$ is connected.
\end{lemma}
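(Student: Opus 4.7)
My plan is to proceed by induction on $|G|$; the base case where $\Delta_d(G)$ is empty is immediate. First I would apply Lemma \ref{frat} and Lemma \ref{norsol}: if $\frat(G)\neq 1$ or $G$ has an abelian minimal normal subgroup $A$, then since the hypothesis of the lemma is inherited by quotients of $G$ (any non-abelian crown-based-power image of $G/N$ is also an image of $G$), the inductive hypothesis gives $\Delta_d(G/\frat(G))$ or $\Delta_d(G/A)$ connected, and the cited lemmas transfer connectedness to $G$. Thus I may assume $\frat(G)=1$ and every minimal normal subgroup of $G$ is non-abelian.

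Fix a minimal normal subgroup $N$ of $G$ and set $R=R_G(N)$, $I=I_G(N)$. Then $G/R\cong (L_N)_{\delta_G(N)}$ is a non-abelian crown-based power, hence $d$-locally connected by hypothesis. Since $|G/I|<|G|$, induction gives $\Delta_d(G/I)\cong\Delta_d\bigl((G/R)/\soc(G/R)\bigr)$ connected. The key sub-claim is that within every coset $\bar a\cdot\soc(G/R)$, all non-isolated vertices of $\Delta_d(G/R)$ lie in the same connected component. To prove it for two such vertices $\bar y_1,\bar y_2$: I pick generating $d$-tuples of $(L_N)_{\delta_G(N)}$ containing them; their $L_N$-projections give generating $d$-tuples of $L_N$ both beginning with $\bar a$; using Lemma \ref{modgg} together with Lemma \ref{unico_rank} (where $d\geq 3$ is essential), I adjust so that both $\bar y_1$ and $\bar y_2$ become vertices of a common graph $\Delta_{\bar a,\bar c_2,\dots,\bar c_d}((L_N)_{\delta_G(N)})$, which is weakly connected by the $d$-local-connectedness hypothesis. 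Lemma \ref{coniugo} then absorbs the resulting conjugation by $\soc(G/R)$ and yields $\bar y_1\sim\bar y_2$ in $\Delta_d(G/R)$.

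Combining this sub-claim with the connectedness of $\Delta_d(G/I)$ and Lemma \ref{induzionenormale} applied in $G/R$ to the normal subgroup $I/R$, one concludes that $\Delta_d(G/R)$ is connected. If $R=1$, then $G=G/R$ and the proof is complete.

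The main obstacle I foresee is the lift from $\Delta_d(G/R)$ to $\Delta_d(G)$ when $R\neq 1$: here $R$ contains every minimal normal subgroup of $G$ outside the $G$-equivalence class of $N$, so $R$ is typically non-soluble and Lemma \ref{norsol} no longer applies directly. My plan to overcome this is to iterate the argument over different $G$-equivalence classes of minimal normals. Since $\bigcap_C R_G(C)=1$ (the intersection running over all equivalence classes, because any nontrivial normal subgroup of $G$ contains a minimal normal, which is missed by $R_G(C)$ for the class $C$ to which it belongs), repeatedly applying Lemma \ref{induzionenormale} with the various $R_G(C)$ together with the lifted sub-claim for each should allow the residual $R$-shifts to be cancelled and yield $\Delta_d(G)$ connected, contradicting the minimal counterexample hypothesis.
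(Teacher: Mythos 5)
Your proposal follows the paper's strategy up to a point (induction, reduction to $\frat(G)=1$ and to the case where the relevant crown is non-abelian via Lemmas \ref{frat} and \ref{norsol}, then weak connectivity of the auxiliary graphs in the crown-based power $G/R_G(N)$), and the sub-claim you prove about vertices in a common $\soc(G/R)$-coset is essentially sound. But the proof is not complete: the step you yourself flag as ``the main obstacle'' --- passing from connectivity of $\Delta_d(G/R_G(N))$ back to $\Delta_d(G)$ when $R_G(N)\neq 1$ --- is a genuine gap, and the proposed fix does not work. Knowing that $\Delta_d(G/R_G(C))$ is connected for every equivalence class $C$, with $\bigcap_C R_G(C)=1$, only yields (via Lemma \ref{induzionenormale}) elements $r_C\in R_G(C)$ with $x\sim y r_C$ for each $C$ separately; there is no mechanism forcing these shifts to a common element of the intersection. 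Each application of Lemma \ref{induzionenormale} reintroduces an uncontrolled element of the kernel, and iterating over the various $R_G(C)$ does not make the residual shift converge to $1$. If connectivity of all the quotients $\Delta_d(G/R_G(C))$ formally implied connectivity of $\Delta_d(G)$, most of the machinery of this section would be unnecessary.

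The missing idea is the one the paper uses to avoid quotienting by $R_G(N)$ at all. Since $\frat(G)=1$, by \cite[Lemma 1.3.6]{cl} the crown is complemented: $I_G(N)=R_G(N)\times U$ for a non-trivial \emph{normal} subgroup $U$ of $G$, which $\phi\colon G\to L_\delta$ maps isomorphically onto $\soc(L_\delta)=N^\delta$. One then applies Lemma \ref{induzionenormale} with $M=U$ (not with $R_G(N)$), reducing to showing $y\sim yu$ for a single $u\in U$. The weak-connectivity path in $L_\delta$ is pulled back to actual subsets $\Omega_j$ of $G$ by lifting the socle corrections into $U$; each $\Omega_j$ satisfies $\langle\Omega_j\rangle R_G(N)=G$ (because its $\phi$-image generates $L_\delta$) and $\langle\Omega_j\rangle U=G$, and \cite[Proposition 11]{crowns} then gives $\langle\Omega_j\rangle=G$ outright. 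Thus the path lives in $\Gamma_d(G)$ itself and joins $y$ to a conjugate of $yu$, which Lemma \ref{coniugo} absorbs; the case $R_G(N)\neq 1$ never has to be lifted separately. Without this complement $U$ and the generation criterion from \cite{crowns}, your argument does not close.
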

\begin{proof}
	We prove the theorem by induction on the order of $G.$ 
By Lemma \ref{frat}, we may assume $\frat(G)=1.$ By \cite[Lemma 1.3.6]{cl}, there exists
	a crown $I_G(N)/R_G(N)$ and a non-trivial normal subgroup $U$ of $G$ such that $I_G(N)=R_G(N)\times U.$ By Lemma \ref{norsol} we may assume that $U$ is non-abelian. We have that $N$ is a $G$-group and $U \sim_G N^\delta,$ for $\delta=\delta_G(N)$ and there exists an epimorphism $\phi: G  \to L_\delta$ where $L$ is a finite monolithic group with $\soc(L)\cong N,$
	such that $\ker \phi = R_G(N)$ and $U^\phi=N^\delta.$
	
	Choose two non-isolated vertices $x$ and $y$ of $\Gamma_d(G).$ By Lemma \ref{induzionenormale}, there exists $u \in U$ such that $\Gamma_d(G)$ contains a path joining $x$ to $yu.$
	Therefore, to conclude our proof it suffices to prove that 
$y$ and $yu$ belong to the same connected component of $G.$ Suppose $\langle y, y_2,\dots,y_d\rangle=G$.
	% We have $y_i^\phi=a_i\circ \mu$ with $a_1 \in L$
	%and $\mu \in N^\delta$ and $(yu)^\phi=a_1 \circ (\mu u^\phi).$ 
	By hypothesis, the graph $\Delta_{y^\phi,y_2^\phi,\dots,y_d^\phi}(G^\phi)$ is weakly connected. Hence, 
	there exist a positive integer $v$ and $\tilde u,$ $u_{i,j} \in U,$ with $1\leq i \leq d$ and $1\leq j \leq v,$ such that the subsets $\Omega_j=\{yu_{1j},y_2 u_{2j},\dots,y_du_{dj}\}$ satisfy the following properties:
	\begin{enumerate} 
		\item for $1\leq j\leq v$, $\langle\Omega_j\rangle^\phi= G^\phi$;
		\item for $1\leq j\leq v-1,$ $\Omega_j \cap \Omega_{j+1} \neq \emptyset;$ 
		\item $y \in \Omega_1$ and $(yu)^{\tilde u} \in \Omega_v.$
	\end{enumerate}
	Since $\langle \Omega_j \rangle^\phi=G^\phi$ and $R_G(N)=\ker \phi,$ $\langle \Omega_j \rangle R_G(N)=G.$ On the other hand, $\langle \Omega_j \rangle U=
	\langle y,\dots,y_d\rangle U=G.$ By \cite[Proposition 11]{crowns}, we conclude
	$\langle \Omega_j\rangle=G$, and therefore we can construct a path in $\Gamma_d(G)$ between $y$ and $(yu)^{\tilde u}.$ To conclude it is enough to notice that, by Lemma \ref{coniugo},
	$yu$ and $(yu)^{\tilde u}$ belong to the same connected component of $\Gamma_d(G).$
\end{proof}

%
%\begin{prop}Let $\mathfrak F$ be a saturated formation which is closed under taking subgroups. If $N$ is a normal subgroup of a finite group $G$ and $G/N\in \mathfrak F,$ then there exists $H\in \mathfrak F$ such that $G=HN.$
%\end{prop}
%\begin{proof}
%	We prove the statement by induction on the order of the group. If $N\leq \frat(G),$ then $G/\frat(G) \in \mathfrak F$ and therefore, since the formation $\mathfrak F$ is saturated, $G\in \mathfrak F.$
%	So we may assume $N\not\leq \frat(G).$ In this case, there exists a maximal subgroup $M$ of $G$ such that $G=MN.$ Moreover, $MN/N \cong M/M\cap N \in \mathfrak F$ so there exists $H\in \mathfrak F$ such that
%	$M=(M\cap N)H$ and therefore $G=NM=N(M\cap N)H=NH.$
%\end{proof}
%
%The previous statement does not in general hold if $\mathfrak F$ is not assumed to be saturated. For example if $\mathfrak A$ is the formation of the finite abelian group and $G$ is the quaternion group of order 2, then $G/\frat(G)\in \mathfrak A$ but $\frat(G)$ does not admit an abelian supplement.

We are now able to conclude the proof.
\begin{proof}[Proof of Theorem \ref{main_conn_rank}]
Suppose that $L_\eta$ is a crown-based power which appears as an epimorphic image of $G.$ Then $\eta\leq \delta(L,d)$ and, since $d\geq 3,$ $L_\eta$ is $d$-locally connected by Corollary \ref{semprelocallycon}. Hence the conclusion follows from Lemma \ref{riduco}.
\end{proof}

\end{document}